\documentclass[a4paper,11pt]{article}
% upper od radky 300 biblio od radky 600
\usepackage{url}                     
\usepackage{graphicx} 
\usepackage{mathrsfs}
\usepackage{latexsym}
\usepackage{amssymb, amstext, amsgen}
\usepackage[centertags,intlimits]{amsmath}
\usepackage{amsthm}
\usepackage[latin1]{inputenc}   
\textwidth 15cm \textheight 23cm \hoffset=-1cm \voffset=-1.7cm

%\setlength{\parindent} {0pt}

% The following is enclosed to allow easy detection of differences in
% ascii coding.
% Upper-case    A B C D E F G H I J K L M N O P Q R S T U V W X Y Z
% Lower-case    a b c d e f g h i j k l m n o p q r s t u v w x y z
% Digits        0 1 2 3 4 5 6 7 8 9
% Exclamation   !           Double quote "          Hash (number) #
% Dollar        $           Percent      %          Ampersand     &
% Acute accent  '           Left paren   (          Right paren   )
% Asterisk      *           Plus         +          Comma         ,
% Minus         -           Point        .          Solidus       /
% Colon         :           Semicolon    ;          Less than     <
% Equals        =           Greater than >          Question mark ?
% At            @           Left bracket [          Backslash     \
% Right bracket ]           Circumflex   ^          Underscore    _
% Grave accent  `           Left brace   {          Vertical bar  |
% Right brace   }           Tilde        ~

\newtheorem{Theorem}{Theorem}

\newtheorem{proposition}{Proposition}

\newtheorem{Lemma}{Lemma}
\newtheorem{corollary}{Corollary}

\def\Z{\ensuremath{\mathbb{Z}}}

\newcounter{mathitem}
\newenvironment{mathitem}
  {\begin{list}{$(\roman{mathitem})$}{
   \setcounter{mathitem}{0}
   \usecounter{mathitem}
   \setlength{\topsep}{0pt plus 2pt minus 0pt}
   \setlength{\parskip}{10pt plus 2pt minus 0pt}
   \setlength{\partopsep}{5pt plus 2pt minus 0pt}
   \setlength{\parsep}{0pt plus 2pt minus 0pt}
   \setlength{\leftmargin}{10pt}
   \setlength{\itemsep}{7pt plus 2pt minus 0pt}}}
 {\end{list}}
 
\newcommand\bmi{\begin{mathitem}}
\newcommand\emi{\end{mathitem}}

\begin{document}

\title{Radio Labelings of Distance Graphs}

\author{Roman \v Cada\footnote{Department of Mathematics, NTIS - New Technologies for the Information Society, and CE-ITI - European Center of Excellence, University of West Bohemia, Pilsen, Czech Republic, e-mail: $\lbrace$cadar, ekstein, holubpre$\rbrace$@kma.zcu.cz.} , Jan Ekstein$^{\ast}$ , P\v remysl Holub$^{\ast}$, Olivier Togni\footnote{Universit\' e be Bourgogne, LE2I UMR CNRS 6306, France, e-mail: otogni@u-bourgogne.fr.}}

\date{\today}

\maketitle

\begin{abstract}
Motivated by the Channel Assignment Problem, we study radio $k$-labelings of graphs.
A radio $k$-labeling of a connected graph $G$ is an assignment $c$ of non negative integers to the vertices of $G$ such that $$|c(x) - c(y)| \geq k+1 - d(x,y),$$
for any two vertices $x$ and $y$, $x\ne y$, where $d(x,y)$ is the distance between $x$
and $y$ in $G$. 

In this paper, we study radio $k$-labelings of distance graphs, i.e., graphs with the set $\Z$ of
  integers as vertex set and in which two distinct vertices $i, j \in \Z$ are adjacent if and only if $|i - j| \in D$. We give some lower and upper bounds for radio $k$-labelings of distance graphs with distance sets $D(1,2,\dots, t)$, $D(1,t)$ and $D(t-1,t)$ for any positive integer $t>1$.

\paragraph{Keywords:} graph labeling, radio $k$-labeling number, distance graph.

\paragraph{AMS Subject Classification (2010):} 05C12, 05C78
\end{abstract}

\section{Introduction}

In wireless networks, an important task is the assignment of radio frequencies to transmitters in a way that avoids interferences of their signals. An interference of signals can occur when transmitters which are close apart receive close frequencies. This problem has been modelled mathematically in a variety of colorings and labelings of vertices of graphs, where vertices represent transmitters and edges indicate closeness of the transmitters. In this context, a general $L(p_1,p_2, \dots, p_t)$-labeling problem (see e.g. \cite{KKT2}) has been proposed: find a labeling of the vertices of a given graph $G$ such that labels of any two vertices of $G$ at distance $d$ differ by at least $p_d$. The aim is to minimize range (or span) of used frequencies, i.e., the difference between the smallest and the largest used label.

The problem of the $L(p_1,p_2, \dots, p_t)$-labeling appears to be difficult in general, hence many particular cases have been studied. Among all, labelings with constraints at two distances, particularly $L(2,1)$-labeling introduced by Griggs and Yeh in \cite{GY92}, have been the subject of many articles. In this paper, we focus on the radio $k$-labeling problem, which one can see as an extension of $L(2,1)$-labeling and also as a particular case of the general $L(p_1,p_2, \dots, p_t)$-labeling.

Let $G$ be a connected graph and let $k$ be an integer, $k\geq 1$.
The distance between two vertices $u$ and $v$ of $G$ is denoted by
$d_G(u,v)$ (or simply $d(u,v)$) and the diameter of $G$ is denoted by $\text{diam}(G)$. 
A {\em radio $k$-labeling} $c$ of $G$ is an assignment of non negative
integers to the vertices of $G$ such that $$|c(u)-c(v)|\geq
k+1-d(u,v),$$ for every two distinct vertices $u$ and $v$ of $G$.
The span of the function $c$ denoted by $\text{rl}{_k}(c)$, is
$\text{max}\{c(x)-c(y): x,y \in V(G) \}$. The \textit{radio
$k$-labeling number} $\text{rl}{_k}(G)$ of $G$ is the minimum span among
all radio $k$-labelings of $G$.

The study of radio $k$-labelings was initiated by Chartrand et al. \cite{kcol}. Quite few results are known concerning radio $k$-labelings. The radio $k$-labeling number for paths was first studied by Chartrand et al. \cite{kcol}, where lower and upper bounds were given. These bounds have been improved by Kchikech et al. \cite{KKT2}. 

Radio $k$-labelings have been investigated mainly for fixed values of $k$.
A radio $1$-labeling is a proper vertex-colouring and $\text{rl}{_1}(G)=\chi(G)-1$. 
For $k=2$, the radio $2$-labeling problem corresponds to the well studied $L(2,1)$-labeling problem as we mentioned above.
Large values of $k$ (close to the diameter of the graph) were also considered for radio $k$-labelings as radio labeling ($k=\text{diam}(G)$) and radio antipodal labeling ($k=\text{diam}(G)-1$).
The interested reader is referred to surveys ~\cite{CZ07,Pan09} and recent papers~\cite{MOTW11,SP12} for complementary results.

% For $k=\text{diam}(G)-1$,
% a $k$-radio labeling is referred to as a {\em (radio)
% antipodal labeling}, because only antipodal vertices can have the
% same label. The minimum span of an antipodal labeling is
% called the {\em (radio) antipodal number}, denoted by $\text{an}(G)$.
% Radio antipodal labelings were first studied by
% Chartrand et al. \cite{kcol,ChCyc,ChRad}, where bounds for the antipodal
% number for path and cycles along with more general bound were given.
%  Khennoufa and Togni \cite{KT} determined the exact
% value of the radio antipodal number of paths. 
% Recently, Justie and Liu \cite{LiJus} have almost completely determined the radio antipodal number of cycles.
% 
% A $k$-radio labeling with $k=\text{diam}(G)$ is known as a \emph{radio labeling} (or
% multi-level distance labeling in \cite{LiMul}). The minimum span of a radio labeling is called the {\em radio number}, denoted by $\text{rn}(G)$. 
% For paths and cycles, the radio number was studied by Chartrand et al. \cite{ChLab} and by Zhang \cite{Zhang} and completely determined by Liu and Zhu \cite{LiMul}.
% The radio number for square of paths and of cycles was investigated by Liu and Xie \cite{LiXie1,LiXie2}.
% More recently, Liu have studied the radio number of trees \cite{LiTre}.

Note that the authors of \cite{ChCyc,ChRad,ChLab,kcol} assume that the labels (colours) are positive. However, when speaking about labelings in relation with frequency assignment, it is more common to use non negative integers as labels. Thus the notation of the present paper follows the terminology of~\cite{KKT2,KKT,LiTre,LiXie1,LiMul,MOTW11,SP12} in which vertices are labeled by non negative integers.

One of motivations for the class of distance graphs considered in this paper comes from networks. In \cite{Wong}, Wong and Coppersmith introduced a concept of multiloop networks $ML(N,s_1,\dots, s_l)$ for organizing multimodule memory devices. In the graph terminology, such a network can be viewed as a graph with $N$ nodes $0,1,\dots, N-1$ and $lN$ links of $l$ types, where the $i$-type links ($i=1,\dots, l$) are $i,j$-edges where $i-j \equiv s_i \, (\mbox{mod }N)$. Such graphs are usually called circulant graphs. For infinitely large $N$, these multiloop networks become graphs which are called distance graphs. In fact, circulant graphs coincide exactly with the regular distance graphs as it was shown in \cite{Rautenbach}. Distance graphs were introduced by Eggleton et al. in \cite{Eggl}, a lot of papers on different kinds of colorings of distance graphs have been published in last 20 years, including some results for $L(2,1)$-labelings (see \cite{LWG08,TG04,TGK05}).

Let $D = \{d_{1}, d_{2}, ..., d_{k}\}$, where $d_{i}$ ($i = 1, 2, ..., k$) are positive integers such that
$d_{1} < d_{2} < ... < d_{k}$. The (infinite) \emph{distance graph} $D(d_{1}, d_{2}, ..., d_{k})$ has the set $\mathbb{Z}$ of integers 
as a vertex set and in which two distinct vertices $i, j \in \mathbb{Z}$ are adjacent if and only if $|i - j| \in D$. 
The {\it finite distance graph} $D_n(d_{1}, d_{2}, ..., d_{k})$ is the subgraph of $D(d_{1}, d_{2}, ..., d_{k})$ induced by vertices $0,1,\ldots ,n-1$. 

In \cite{LWG08,TG04,TGK05}, radio $2$-labeling numbers have been determined only for some of the distance graphs (mainly $4$-regular). The aim of this paper is to obtain bounds on the radio $k$-labeling number of some distance graphs in terms of $k$ (and not depending on the order of the graph). For any positive integer $t\geq 2$ we show that 
$$
\begin{array}{ccc}
\frac t2 k^2 + \frac 12 \leq & \hspace{-2mm} \text {rl}_k(D(1,2,\dots, t))\leq & \hspace{-2mm} \left\{ \begin{array}{ll} \frac t2 k^2 +\frac t2 k, & \mbox{when } k \mbox{ is odd},\\
 \frac t2 k^2 + k, & \mbox{when } k \mbox{ is even}. 
\end{array}
 \right.
\end{array}
$$

In Propositions \ref{p1t} and \ref{pt-1t} we give analogous lower bounds for the radio $k$-labeling number of distance graphs $D(1,t)$ and $D(t-1,t)$ for $k\geq \frac t2$ and $k\geq t$, respectively. 

When $k$ is odd, the upper bounds for $\mbox{rl}_k \left( D(1,2,\dots, \, t)\right)$ can be decreased for the distance graphs $D(1,t)$ and $D(t-1,t)$ as subgraphs of $D(1,2,\dots, t)$, $t> 2$, as it is shown in Theorems \ref{thm D(1,t) t odd}, \ref{thm D(1,t) t even} and \ref{thm D(t-1,t)}.

%$$
%\begin{array}{cccll}
%\frac{t}{2}k^2 -P_2(t)k + P_3(t)\leq & \hspace{-2mm} \text{rl}_k(D(1,t)) & \hspace{-2mm} \leq & \hspace{-2mm} \frac t2 k^2, & \mbox{for } t\geq 3 \mbox{ and odd } k, \\
%\frac{t}{2}k^2 -P'_2(t)k + P'_3(t)\leq & \hspace{-2mm} \text{rl}_k(D(t-1,t)) & \hspace{-2mm} \leq & \hspace{-2mm} \frac t2 k^2+k-\frac{t+2}2, & \mbox{for } t\geq 3 \mbox{ and odd } k,\\
%\end{array}
%$$

%where $P_i(t)$, $P'_i(t)$ denote polynomials of variable $t$ of degree $i$.

%Recall the known bounds for the radio $k$-labeling number of the most basic distance graph, i.e. the infinite path $P_{\infty}=D(1)$.
%It was shown \cite{kcol,KKT2} that for any $n>3$ and any $1\leq k\leq n-3$,

%$$\frac{k^2+4}{2} \le \text{rl}_k(P_n) \leq \frac{k^2+2k}{2}, \text{ if $k$ is even},$$

%$$\frac{k^2+1}{2}\leq \text{rl}_k(P_n) \leq\frac{k^2+2k-1}{2} \text{ if $k$ is odd,}$$

%and conjectured \cite{KKT2} that the upper bound is the exact value of the radio %$k$-labeling number when the length of the path is large enough.

\section{Lower bounds}

A classical method for finding a lower bound on the radio  $k$-labeling number of a graph is to use the following relation with another graph parameter called the {\em upper traceable number}~\cite{OZV08}, denoted $t^+$: for a graph $G$ of order $n$ and for a linear ordering $s:(x_1,x_2,\dots,x_n)$ of its vertices, let $d(s)= \sum_{i=1}^{n-1}d(x_i,x_{i+1})$. Then $t^+(G)=\max{d(s)}$, where the maximum is taken over all linear orderings $s$ of vertices of $G$.

\begin{Theorem}[\cite{KKT2}]
\label{th1} For any integer $k\geq 1$, and any graph $G$ of order $n$,
$$\text{rl}_{k}(G)\geq
    (n-1)(k+1)-t^+(G).$$
\end{Theorem}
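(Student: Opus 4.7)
My plan is to start from an arbitrary radio $k$-labeling $c$ of $G$, re-index the vertices so that their labels form an increasing sequence, and then apply the defining inequality of a radio $k$-labeling to consecutive pairs in this ordering.

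Concretely, let $c$ be any radio $k$-labeling of $G$ and write the vertices of $G$ as $x_1,x_2,\dots,x_n$ with $c(x_1)<c(x_2)<\cdots<c(x_n)$ (strict because distinct vertices must receive distinct labels, as $k+1-d(u,v)\geq 1$ for any $u\ne v$ when $k\geq 1$). For every index $i\in\{1,\dots,n-1\}$, the definition of a radio $k$-labeling gives
$$c(x_{i+1})-c(x_i)\ \geq\ k+1-d(x_i,x_{i+1}).$$
Summing these $n-1$ inequalities telescopes the left-hand side:
$$c(x_n)-c(x_1)\ \geq\ (n-1)(k+1)-\sum_{i=1}^{n-1}d(x_i,x_{i+1}).$$

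Now I would identify the right-hand sum with the parameter $d(s)$ for the linear ordering $s=(x_1,\dots,x_n)$ induced by $c$. By definition, $d(s)\leq t^+(G)$, so
$$\mathrm{rl}_k(c)\ =\ c(x_n)-c(x_1)\ \geq\ (n-1)(k+1)-t^+(G).$$
Since this holds for every radio $k$-labeling, in particular for an optimal one, the conclusion $\mathrm{rl}_k(G)\geq (n-1)(k+1)-t^+(G)$ follows.

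There is no real obstacle here: the argument is a one-line telescoping plus the definition of $t^+$. The only small points worth stating carefully are that labels are pairwise distinct (so the sorting is strict and covers all $n$ vertices, giving exactly $n-1$ gaps to sum) and that the span $\mathrm{rl}_k(c)=c(x_n)-c(x_1)$ equals the maximum difference of labels by construction of the ordering.
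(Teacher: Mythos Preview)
The paper does not give its own proof of this theorem; it is quoted from~\cite{KKT2} and used as a black box. Your argument is the standard one and is essentially correct.

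One small inaccuracy: you assert that distinct vertices must receive distinct labels because $k+1-d(u,v)\geq 1$ whenever $u\neq v$. That is only true if $d(u,v)\leq k$; when $d(u,v)>k$ the radio condition imposes nothing, and two such vertices may share a label. Fortunately your proof does not actually need strictness: order the vertices so that $c(x_1)\leq c(x_2)\leq\cdots\leq c(x_n)$. For each $i$ one still has $c(x_{i+1})-c(x_i)=|c(x_{i+1})-c(x_i)|\geq k+1-d(x_i,x_{i+1})$ (the right-hand side is non-positive when labels coincide), and the telescoping and the bound $\sum_{i=1}^{n-1}d(x_i,x_{i+1})\leq t^+(G)$ go through unchanged.
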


In order to find bounds for the upper traceable number of some distance graphs, the upper traceable number of the path (determined in \cite{LiMul}, without using the above terminology) will be useful:
\begin{Lemma}[\cite{LiMul}]
\label{t+Pn} For any integer $n\geq 2$,
    $$t^+(P_n)=\left\{%
\begin{array}{ll}
  \frac{1}{2}n^2-1 & \hbox{if $n$ is even,}    \\
  \frac{1}{2}(n^2-1)-1 & \hbox{if $n$ is odd.}  \\
\end{array}%
\right.$$
\end{Lemma}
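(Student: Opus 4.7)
The plan is to prove the upper bound $d(s)\le\lfloor n^2/2\rfloor-1$ for every linear ordering $s$ of $V(P_n)$ and then exhibit orderings that reach it. Identify $V(P_n)$ with $\{1,2,\ldots,n\}$ so that distances coincide with absolute differences of labels; an ordering $s=(x_1,\ldots,x_n)$ is then a permutation $\sigma$, and we must control $d(s)=\sum_{i=1}^{n-1}|\sigma(i+1)-\sigma(i)|$.

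The main tool I would use is a ``threshold-crossing'' decomposition. For each $k\in\{1,\ldots,n-1\}$ set $\alpha_k=k+\tfrac12$, and say an adjacent pair $\sigma(i),\sigma(i+1)$ crosses $\alpha_k$ if $\alpha_k$ lies strictly between its two values. Since $|\sigma(i+1)-\sigma(i)|$ equals the number of thresholds it crosses, swapping summations gives $d(s)=\sum_{k=1}^{n-1}X_k(\sigma)$, where $X_k(\sigma)$ counts the pairs crossing $\alpha_k$. Replacing $\sigma(i)$ by $0$ if $\sigma(i)\le k$ and by $1$ otherwise reduces $X_k$ to the number of sign changes in a binary word of length $n$ with $k$ zeros and $n-k$ ones. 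A standard block-counting argument bounds this by $2\min(k,n-k)$, with the stronger estimate $X_k\le 2\min(k,n-k)-1$ when $k=n-k$; furthermore, attaining equality forces prescribed endpoint conditions (both endpoints on the majority side when $k\ne n-k$, endpoints on opposite sides when $k=n-k$).

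For $n$ even, summing these bounds already yields $d(s)\le\tfrac12 n^2-1$, the $-1$ coming from the central threshold $k=n/2$. For $n$ odd the naive sum only gives $\tfrac12(n^2-1)$, so I need to extract one extra unit. I would do this by coupling the two central thresholds $k=\lfloor n/2\rfloor$ and $k=\lceil n/2\rceil$: equality at the first forces both endpoints of $\sigma$ to exceed $\lfloor n/2\rfloor$, while equality at the second forces both to be at most $\lceil n/2\rceil$; together they would pin both endpoints to the single value $\lceil n/2\rceil$, which is impossible, so at least one of $X_{\lfloor n/2\rfloor}, X_{\lceil n/2\rceil}$ must drop by one. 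For the matching construction I would exhibit an explicit zigzag between the low and high halves of $\{1,\ldots,n\}$, for instance $(m+1,\,1,\,m+2,\,2,\,\ldots,\,2m,\,m)$ when $n=2m$ and $(m+1,\,2m+1,\,1,\,m+2,\,2,\,\ldots,\,m)$ when $n=2m+1$, and verify the spans by direct computation of consecutive differences.

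The main obstacle is precisely the off-by-one in the odd case. Each crossing bound $X_k\le 2\min(k,n-k)$ is individually tight, so producing the extra $-1$ requires the coupled endpoint argument at the two central thresholds rather than any purely local estimate. Once that step is in place, the block-counting bound and the verification of the zigzag construction are routine.
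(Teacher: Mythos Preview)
Your proof is correct. The paper itself does not prove this lemma; it is quoted from Liu and Zhu~\cite{LiMul} and used as a black box, so there is no in-paper argument to compare against.

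For the record, your threshold-crossing approach is a clean self-contained proof. The decomposition $d(s)=\sum_{k=1}^{n-1} X_k$ together with the block-counting bound $X_k\le 2\min(k,n-k)$ (and the automatic drop to $2k-1$ at $k=n/2$ when $n$ is even) gives the even case immediately. In the odd case your coupling of the two central thresholds $k=m$ and $k=m+1$ (for $n=2m+1$) is exactly right: equality at $k=m$ forces both endpoints of the permutation to lie in $\{m+1,\ldots,2m+1\}$, while equality at $k=m+1$ forces both to lie in $\{1,\ldots,m+1\}$, pinning both endpoints to the single value $m+1$, a contradiction. Your explicit constructions also check out: for $n=2m$ the consecutive gaps in $(m+1,1,m+2,2,\ldots,2m,m)$ are $m,m+1,m,m+1,\ldots,m$, summing to $m\cdot m+(m-1)(m+1)=2m^2-1$; for $n=2m+1$ the gaps $m,\,2m$ followed by the alternating $m+1,m,\ldots,m$ sum to $3m+(m-1)(2m+1)=2m^2+2m-1$.
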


\begin{Lemma}
\label{le1}
Let $G$ be a graph of order $n$ with $V(G)=\{0,1,\ldots ,n-1\}$. If there are positive real numbers $\alpha$ and $\beta$ such that $d_G(i,j)\le \frac{j-i+\alpha}{\beta}$ for any $i$ and $j$ satisfying $0\leq i< j\leq n-1$, then 

$$t^+(G)\le \frac{\frac{n^2}{2} +\alpha (n -1) -1}{\beta}.$$
\end{Lemma}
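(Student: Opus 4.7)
The plan is to bound $d(s)$ for an arbitrary linear ordering $s=(x_1,\dots,x_n)$ of $V(G)$, then take the maximum. The hypothesis is stated for pairs $i<j$, so the first step is simply to apply it in the form $d_G(x_i,x_{i+1})\le \frac{|x_{i+1}-x_i|+\alpha}{\beta}$ (valid regardless of which of $x_i,x_{i+1}$ is larger). Summing over $i=1,\dots,n-1$ gives
$$d(s)=\sum_{i=1}^{n-1}d_G(x_i,x_{i+1})\le \frac{1}{\beta}\Bigl(\sum_{i=1}^{n-1}|x_{i+1}-x_i|+(n-1)\alpha\Bigr).$$

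The second step is to recognize that the remaining sum $\sum_{i=1}^{n-1}|x_{i+1}-x_i|$ is exactly $d(s')$ evaluated in the path $P_n$ on vertex set $\{0,1,\dots,n-1\}$ with the natural ordering adjacencies, since in $P_n$ the distance between vertices $i$ and $j$ is $|i-j|$. Since $(x_1,\dots,x_n)$ is a permutation of $V(P_n)=\{0,1,\dots,n-1\}$, this sum is at most $t^+(P_n)$. Invoking Lemma~\ref{t+Pn}, both parity cases give $t^+(P_n)\le \frac{n^2}{2}-1$ (the odd case yields $\frac{n^2}{2}-\frac{3}{2}$, which is even smaller, so the even case dominates).

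Combining the two bounds yields
$$d(s)\le \frac{\frac{n^2}{2}+\alpha(n-1)-1}{\beta},$$
and since the right-hand side is independent of the chosen ordering $s$, taking the maximum over $s$ gives the claimed upper bound on $t^+(G)$.

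There is no real obstacle here: the argument is a one-step reduction to Lemma~\ref{t+Pn} via the hypothesis. The only minor point of care is to make sure the absolute value step is legitimate (the hypothesis only talks about $i<j$, so one must note that $d_G$ is symmetric) and to verify that the odd-$n$ formula of Lemma~\ref{t+Pn} also fits under the bound $\frac{n^2}{2}-1$, so that the statement does not need to split into parity cases.
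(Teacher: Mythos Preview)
Your proof is correct and follows essentially the same route as the paper's: bound each term $d_G(x_i,x_{i+1})$ via the hypothesis, sum to reduce to $t^+(P_n)$, and apply Lemma~\ref{t+Pn}. The extra remarks you add about the symmetry of $d_G$ and the odd-$n$ parity case fitting under $\frac{n^2}{2}-1$ are just explicit justifications of steps the paper leaves implicit.
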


\begin{proof} For a path $P$ on the vertices $0,1,\ldots, n-1$, we have  $d_{P}(i,j)=|j-i|$ for every $i,j\in\{0,1,\ldots,n-1\}$.
Hence, for any ordering $(x_1,x_2,\dots,x_n)$ of the vertices of $G$, we have
$$\sum_{i=1}^{n-1} d_G(x_{i},x_{i+1}) \le \sum_{i=1}^{n-1}\frac{|x_{i+1}-x_{i}|+\alpha}{\beta}= \sum_{i=1}^{n-1}\frac{d_{P}(x_{i},x_{i+1})+\alpha}{\beta}\le \frac{t^+ (P_n) + \alpha(n-1)}{\beta}.$$

Therefore, with Lemma~\ref{t+Pn}, we obtain the desired inequality.
\end{proof}

\begin{proposition}\label{p12t} For any positive integers $k\ge 1$ and $t\ge 2$,
 $$\text{rl}_k(D(1,2,\ldots, t)) \ge \frac{t}{2}k^2 +\frac{1}{2}.$$
\end{proposition}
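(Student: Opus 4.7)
The plan is to combine Theorem~\ref{th1} with Lemma~\ref{le1}, applied to the finite induced subgraph $D_n(1,2,\ldots,t)$ for a well chosen value of $n$, and then to transfer the resulting inequality back to the infinite graph.

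First I would note that every vertex of $D(1,2,\ldots,t)$ is adjacent to all vertices within distance $t$ on the integer line, so for $i<j$ one has $d(i,j)=\lceil(j-i)/t\rceil$; since shortest paths may be chosen monotone, the same distance formula holds in the induced subgraph on any set of consecutive integers. Using the estimate $\lceil(j-i)/t\rceil\leq \frac{(j-i)+(t-1)}{t}$, Lemma~\ref{le1} applies with $\alpha=t-1$ and $\beta=t$, giving
\[
  t^+\bigl(D_n(1,2,\ldots,t)\bigr) \;\leq\; \frac{\tfrac{n^2}{2}+(t-1)(n-1)-1}{t}.
\]

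Inserting this into Theorem~\ref{th1} and multiplying by $t$ (using the identity $t(k+1)-(t-1)=tk+1$) yields
\[
  t\cdot\text{rl}_k\bigl(D_n(1,2,\ldots,t)\bigr) \;\geq\; (n-1)(tk+1)-\tfrac{n^2}{2}+1.
\]
Regarded as a function of a real variable, the right-hand side is maximized at $n=tk+1$, where a short evaluation gives the value $(t^2k^2+1)/2$. Dividing by $t$ gives $\text{rl}_k(D_{tk+1}(1,2,\ldots,t))\geq \tfrac{t}{2}k^2+\tfrac{1}{2t}$. Since restricting any radio $k$-labeling of $D(1,2,\ldots,t)$ to $\{0,1,\ldots,tk\}$ produces a radio $k$-labeling of the isometric subgraph $D_{tk+1}(1,2,\ldots,t)$ of no larger span, this bound transfers to $D(1,2,\ldots,t)$.

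Finally I would close the small gap between $\tfrac{1}{2t}$ and the target $\tfrac12$ by integrality: since $\text{rl}_k$ is an integer, $\text{rl}_k\geq \bigl\lceil \tfrac{t}{2}k^2+\tfrac{1}{2t}\bigr\rceil$, and a brief case check on the parity of $tk^2$ confirms that this ceiling is always at least $\tfrac{t}{2}k^2+\tfrac12$. The step I expect to be the main obstacle is exactly this last one: the inequality $\lceil m/t\rceil\leq (m+t-1)/t$ underlying the application of Lemma~\ref{le1} is tight only when $m\equiv 1\pmod t$, so the continuous computation loses $(t-1)/(2t)$, and the clean constant $\tfrac12$ is recovered only through the integrality argument.
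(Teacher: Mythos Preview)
Your proposal is correct and follows essentially the same route as the paper: apply Lemma~\ref{le1} with $\alpha=t-1$, $\beta=t$ to $D_n(1,2,\ldots,t)$, feed the bound into Theorem~\ref{th1}, optimize at $n=tk+1$ to obtain $\tfrac{t}{2}k^2+\tfrac{1}{2t}$, pass to the infinite graph via the subgraph relation, and round up by integrality. The only cosmetic difference is that the paper simplifies the quadratic in $n$ directly rather than first multiplying through by $t$, and it states the final ceiling inequality $\lceil \tfrac{t}{2}k^2+\tfrac{1}{2t}\rceil\ge \tfrac{t}{2}k^2+\tfrac12$ without spelling out the parity check you outline.
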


\begin{proof}
Let $n>t$ and let $G=D_n(1,2,\ldots,t)$. It is easily seen that for $j\ge i$,
$$d_G(i,j)=\left\lceil\frac{j-i}{t}\right\rceil \le \frac{j-i+t-1}{t}.$$

Hence $G$ satisfies the conditions of Lemma~\ref{le1} with $\alpha=t-1$ and $\beta=t$ and we have $$t^{+}(G)\le \frac{\frac{n^2}{2} + (t-1)(n-1)-1}{t}=\frac{n}{t}(\frac{n}{2}+t-1)-1.$$

Consequently, by Theorem~\ref{th1}, we obtain that
$$\text{rl}_k(G) \ge (n-1)(k+1) - \frac{n}{t}(\frac{n}{2}+t-1)+1 = n(k-\frac{n-2}{2t}) -k.$$ 

The right hand side of the inequality is maximized when $n=tk+1$ and it gives
$$\text{rl}_k(D_{tk+1}(1,2,\ldots,t)) \ge (tk+1)(k-\frac{tk-1}{2t}) -k=\frac{t}{2}k^2 +\frac{1}{2t}.$$
As $D_{tk+1}(1,2,\ldots,t)$ is a subgraph of $D(1,2,\ldots,t)$ and since the radio  $k$-labeling number is a natural number, we have $\text{rl}_k(D(1,2,\ldots,t))\ge\lceil\frac{t}{2}k^2 +\frac{1}{2t}\rceil \ge \frac{t}{2}k^2 +\frac{1}{2}$ and the desired inequality is proved.
\end{proof}

For the graphs $D(1,t)$, we can show a lower bound of the same order by using a similar argument. We will use the following statement.

\begin{Lemma}[\cite{T11}]\label{ldist}
The distance between two vertices $i$ and $j$ of $D(1, t)$ is $d(i, j) = \min(q + r; q +
1 + t - r),$ where $\vert i - j\vert  = qt + r,$ with $0 \leq r < t$.
\end{Lemma}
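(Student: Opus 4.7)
The plan is to express $d(i,j)$ as the length of a shortest walk in $D(1,t)$ and reduce the problem to a one-variable integer optimisation. By symmetry assume $i < j$, and write $j - i = qt + r$ with $q \ge 0$ and $0 \le r < t$. Since the neighbours of any vertex are obtained by a step of $\pm 1$ or $\pm t$, every walk from $i$ to $j$ of length $\ell$ consists of $a$ steps of $+t$, $b$ steps of $-t$, $c$ steps of $+1$ and $d$ steps of $-1$ satisfying $(a-b)t + (c-d) = qt + r$ and $a+b+c+d = \ell$. Setting $x = a-b$ and $y = c-d$ we have $\ell \ge |x| + |y|$, and equality is realised by choosing $a,b,c,d$ without cancellation (i.e. $a = \max(x,0)$, $b = \max(-x,0)$, and analogously for $c,d$). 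Hence
\[
d(i,j) = \min_{x \in \mathbb{Z}} \bigl( |x| + |(q-x)t + r| \bigr).
\]

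For the upper bound, two explicit choices suffice. Taking $x = q$ (so $y = r$) gives the walk $i, i+t, i+2t, \dots, i+qt, i+qt+1, \dots, j$ of length $q+r$. Taking $x = q+1$ (so $y = r-t$) gives $i, i+t, \dots, i+(q+1)t, i+(q+1)t-1, \dots, j$ of length $q+1+t-r$. Therefore $d(i,j) \le \min(q+r,\, q+1+t-r)$.

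For the matching lower bound I would analyse $f(x) = |x| + |(q-x)t + r|$ on three ranges of $x$. For $x \ge q+1$ we have $(q-x)t + r \le r - t < 0$, so $f(x) = x + (x-q)t - r$ is strictly increasing and attains its minimum at $x = q+1$ with value $q+1+t-r$. For $0 \le x \le q$ we have $(q-x)t + r \ge 0$, so $f(x) = qt + r - (t-1)x$; since $t \ge 2$ this is non-increasing and is minimised at $x = q$ with value $q+r$. For $x < 0$, $f(x) = -(t+1)x + qt + r \ge qt + r \ge q+r$. Combining the three cases yields $d(i,j) \ge \min(q+r,\, q+1+t-r)$, completing the proof.

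The only real subtlety is the initial bookkeeping: parametrising walks by the net shift counts $(x,y)$ and observing that an optimal walk never cancels moves. Once that is in place the rest is a routine one-variable case analysis, so I do not expect any serious obstacle.
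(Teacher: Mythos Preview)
Your proof is correct and complete. The approach is the same in spirit as the paper's---both reduce the distance to counting signed $t$-steps and $1$-steps---but the paper's argument is really just a one-line assertion: it states that ``any minimal path uses either $q$ $t$-edges and $r$ $1$-edges or $q+1$ $t$-edges and $t-r$ $1$-edges'' and stops there, giving the upper bound implicitly and leaving the lower bound unjustified. Your version actually earns the lower bound by parametrising all walks via $(x,y)$, reducing to the one-variable minimisation of $f(x)=|x|+|(q-x)t+r|$, and checking the three ranges of $x$; this is what the paper's sketch is tacitly relying on. So your route is not different, just more rigorous, and the extra work buys an honest proof rather than a plausibility claim.
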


\begin{proof} \cite{T11}
Without loss on generality we can assume that $j\geq i$. Any minimal path between $i$ and $j$ uses either $q$ $t$-edges (edges joining vertex $a$ with vertex $a+t$ for any $a\in \mathbb{Z}$)
and $r$ $1$-edges (edges joining vertex $a$ with vertex $a+1$ for any $a\in \mathbb{Z}$) or $q + 1$ $t$-edges and $t-r$ $1$-edges. 
\end{proof}

\begin{proposition}\label{p1t} For any positive integers $t\ge 3$ and $k\ge \frac t2$,
 $$\text{rl}_k(D(1,t)) \ge \frac{t}{2}k^2 -P(t)k+Q(t),$$ with $P(t)=\frac{t^2}{2}-t+\frac{1}{2}$ and $Q(t)=\frac{t^3}{8}-\frac{t^2}{2}+\frac{3t}{4}-\frac{1}{2}$.
\end{proposition}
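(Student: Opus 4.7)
The plan is to mirror the proof of Proposition~\ref{p12t}: use Theorem~\ref{th1} together with the argument of Lemma~\ref{le1}, now with distances supplied by Lemma~\ref{ldist}. Because the finite distance graph $D_n(1,t)$ exhibits boundary effects that may make $d_{D_n}$ strictly larger than the infinite graph distance, I would run the Theorem~\ref{th1}/Lemma~\ref{le1} argument directly on $D(1,t)$, restricting any hypothetical radio $k$-labeling to the $n$ vertices $0,1,\dots,n-1$. The first ingredient is the distance bound: from Lemma~\ref{ldist}, writing $j-i=qt+r$ with $0\le r<t$,
$$t\cdot d(i,j)-(j-i)\;=\;\min\bigl(r(t-1),\;t+t^2-r(t+1)\bigr)\;\le\;\tfrac{t^2-1}{2},$$
since the two arguments of the minimum coincide at $r=(t+1)/2$, where each equals $(t-1)(t+1)/2$. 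Hence $d(i,j)\le\bigl((j-i)+(t^2-1)/2\bigr)/t$ for every pair $i<j$ in $\Z$.

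For any radio $k$-labeling $c$ of $D(1,t)$, restricting $c$ to $\{0,\dots,n-1\}$ and ordering these vertices by label gives, by telescoping $c(x_{i+1})-c(x_i)\ge k+1-d(x_i,x_{i+1})$ and applying the distance bound above together with $t^+(P_n)\le n^2/2-1$ from Lemma~\ref{t+Pn}, the inequality
$$\text{rl}_k(D(1,t))\;\ge\;(n-1)(k+1)\;-\;\frac{n^2/2+(t^2-1)(n-1)/2-1}{t}.$$
The right-hand side is a concave quadratic in $n$, maximized at $n^*=t(k+1)-(t^2-1)/2$, with value
$$f(n^*)\;=\;\frac{(n^*-1)^2+1}{2t}\;=\;\frac{\bigl(tk-(t-1)^2/2\bigr)^2+1}{2t}\;=\;\frac{t}{2}k^2-\frac{(t-1)^2}{2}k+\frac{(t-1)^4+4}{8t}.$$
Two clean algebraic identities close the loop: $P(t)=(t-1)^2/2$ matches the $k$-coefficient, and expanding $(t-1)^4=t^4-4t^3+6t^2-4t+1$ shows $\tfrac{(t-1)^4+4}{8t}=Q(t)+\tfrac{5}{8t}$, so the continuous optimum already beats the claimed bound by $\tfrac{5}{8t}$.

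Finally I would handle integrality of $n$. For $t$ odd, $(t-1)^2/2$ is an integer, so $n^*$ is a positive integer; the hypothesis $k\ge t/2$ forces $n^*\ge t+1$, so one obtains $\text{rl}_k(D(1,t))\ge\tfrac{t}{2}k^2-P(t)k+Q(t)+\tfrac{5}{8t}$ directly. For $t$ even, $n^*$ is a half-integer; taking $n$ to be its nearest integer loses exactly $\tfrac{1}{8t}$ by Taylor expansion of the concave quadratic $f$, still leaving a surplus of $\tfrac{1}{2t}>0$ above the target $\tfrac{t}{2}k^2-P(t)k+Q(t)$. The main technical obstacle I foresee is precisely this algebraic/integrality bookkeeping: one must identify the expansion of $(t-1)^4$ that makes the constant term match $Q(t)$ with a positive $O(1/t)$ remainder, and verify that the half-integer rounding for even $t$ does not erode more than the available $\tfrac{5}{8t}$ surplus.
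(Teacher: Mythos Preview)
Your approach is essentially the same as the paper's: bound the distance via Lemma~\ref{ldist} to get $\alpha=(t^2-1)/2$ and $\beta=t$, feed this into the Theorem~\ref{th1}/Lemma~\ref{le1} machinery, and optimize over~$n$. The paper chooses $n=tk-\lfloor t^2/2\rfloor+t$, which coincides with your $n^*$ for $t$ odd and with $n^*-\tfrac12$ for $t$ even, and reports the surplus $\tfrac{1}{2t}$ uniformly; your bookkeeping is sharper in noting that the odd-$t$ case actually yields $\tfrac{5}{8t}$.

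One point where your write-up is cleaner than the paper's: you run the telescoping argument directly on the infinite graph $D(1,t)$ restricted to $\{0,\dots,n-1\}$, rather than on the finite graph $D_n(1,t)$. The paper applies Lemma~\ref{ldist} to $G=D_n(1,t)$, but that lemma is proved for the infinite graph, and distances in $D_n(1,t)$ can be strictly larger near the boundary (the path using $q{+}1$ steps of size $t$ may leave $\{0,\dots,n-1\}$). Your formulation sidesteps this by using the infinite-graph distance throughout, which is what the radio condition on $D(1,t)$ actually involves.
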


\begin{proof}
Let $n$ be a positive integer and let $G=D_n(1,t)$. Then, by Lemma \ref{ldist}, $d_G(i,j)=q+\min(r,t+1-r)$, for $j\ge i$ with $j-i=qt+r$ and $0\le r< t$.
Thus $$d_G(i,j)\le \frac{j-i+\left\lceil\frac{t}{2}\right\rceil(t-1)}{t}\le \frac{j-i+\frac{t+1}{2}(t-1)}{t}.$$

Hence $G$ satisfies the conditions of Lemma~\ref{le1} with $\alpha=\frac{t^2-1}{2}$ and $\beta=t$ and we have
$$t^{+}(G)\le \frac{\frac{n^2}{2} -1+ \frac{t^2-1}{2}(n-1)}{t} = \frac{(n-1)(n+t^2)-1}{2t}.$$

Consequently, by Theorem~\ref{th1}, we obtain that
$$\text{rl}_k(G) \ge (n-1)(k+1) - \frac{(n-1)(n+t^2)-1}{2t}= (n-1)(k+1-\frac{n+t^2}{2t}) +\frac{1}{2t}.$$ 

The right hand side of the inequality is maximized when $n=tk-\lfloor\frac{t^2}{2}\rfloor+t$ and since, by the hypothesis, $k\ge \frac t2$, we get $n\ge 1$. Reporting this value in the above inequality gives
$$\text{rl}_k(D_{tk-\left\lfloor\frac{t^2}{2}\right\rfloor+t}(1,t)) \ge (tk-\left\lfloor\frac{t^2}{2}\right\rfloor+t-1) (k+1-\frac{tk-\left\lfloor\frac{t^2}{2}\right\rfloor+t+t^2}{2t})+\frac{1}{2t}.$$
After simplification, in both cases $t$ is odd and $t$ is even, we obtain 
$$\text{rl}_k(D_{tk-\lfloor\frac{t^2}{2}\rfloor+t}(1,t)) \ge \frac{t}{2}k^2 -(\frac{t^2}{2}-t+\frac{1}{2})k + \frac{t^3}{8}-\frac{t^2}{2}+\frac{3t}{4}-\frac{1}{2}+\frac{1}{2t},$$ which concludes the proof.
\end{proof}

For the graphs $D(t-1,t)$, we first compute an upper bound on the distance between two vertices:
\begin{Lemma}
\label{let-1t}
Let $t\geq 2$ be an integer, $i,j$ a pair of vertices of the graph $G=D(t-1, t)$ and  let $\vert i-j \vert =qt+r$,
where $q,r\in \mathbb{N}$, $0\leq r<t$. Then $d_G(i,j)\le q+t$.
\end{Lemma}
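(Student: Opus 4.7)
The plan is to exhibit, for any pair of vertices $i,j \in \Z$ with $|i-j| = qt+r$ and $0 \le r < t$, an explicit walk from $i$ to $j$ in $G = D(t-1,t)$ of length at most $q+t$. Without loss of generality I may assume $j \ge i$, so that $j - i = qt + r$.

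The strategy is two-stage. First, take $q$ successive edges of step $+t$ to move from $i$ to $i + qt$; this consumes $q$ edges. Second, cover the residual displacement $r$ using at most $t$ additional edges of types $\pm(t-1)$ and $\pm t$, which I would arrange by splitting into two regimes. If $1 \le r \le \lfloor t/2 \rfloor$, I take $r$ edges of $+t$ followed by $r$ edges of $-(t-1)$, giving net displacement $rt - r(t-1) = r$ and using $2r \le t$ extra edges. If $\lfloor t/2 \rfloor < r < t$, I instead take $(t-1-r)$ edges of $-t$ followed by $(t-r)$ edges of $+(t-1)$, with net displacement $(t-r)(t-1) - (t-1-r)t = r$ and using $2t - 2r - 1 < t$ extra edges (the strict inequality holds because $r > \lfloor t/2 \rfloor$ forces $2r \ge t+1$). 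If $r=0$ no extra edges are needed at all.

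Summing over the two stages, in every case the resulting $(i,j)$-walk has length at most $q+t$, which yields $d_G(i,j) \le q+t$ as required. The main (mild) obstacle is the arithmetic bookkeeping: one has to pick, in each regime of $r$, the right integer combination $at + b(t-1) = r$ with small $|a|+|b|$, and then verify that the number of extra edges stays within $t$---the critical inequality $2t - 2r - 1 < t$ for $r > \lfloor t/2 \rfloor$ being the only non-trivial check.
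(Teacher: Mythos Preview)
Your proof is correct and is essentially the same argument as the paper's: both express the residual $r$ as a short integer combination of $t$ and $t-1$, splitting at $r \approx t/2$, and then prepend $q$ steps of $+t$. The only cosmetic difference is that the paper first treats the case $q=0$ (i.e.\ $0\le j-i<t$) separately and then appeals to it for general $q$, whereas you do the $q$ steps first and handle the residual afterwards; the arithmetic in the two regimes is identical.
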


\begin{proof}Let $i,j$ be two integers with $j\ge i$.
If $0\le j-i\le t/2$ then   $d_G(i,j)\le 2(j-i) \le t$ since $j-i=(j-i)t - (j-i)(t-1)$. If $t/2< j-i\le t-1$ then
$d_G(i,j)\le 2(t-j+i)+1 \le 2t-t-1+1=t$ since $j-i= (t-j+i)(t-1)-(t-1-j+i)t$.
Now, if $j-i \ge t$ then $j-i=qt+r$, with $q,r\in \mathbb{N}$ and $0\leq r<t$. Hence, with the above, $d_G(i,j)\le
q+t$.
\end{proof}

\begin{proposition}\label{pt-1t} For any positive integers $t\ge 3$ and $k\ge t$,
 $$\text{rl}_k(D(t-1,t)) \ge \frac{t}{2}k^2 -P(t)k+Q(t),$$ with $P(t)=t^2-t+1$ and
$Q(t)=\frac{t^3}{2}-t^2+\frac 32 t -1$.
\end{proposition}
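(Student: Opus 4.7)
My plan is to mirror the proof of Proposition~\ref{p1t}, combining the distance bound from Lemma~\ref{let-1t} with Lemma~\ref{le1} and Theorem~\ref{th1}, and then optimizing the resulting expression over $n$.

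First, I will convert Lemma~\ref{let-1t} into the form required by Lemma~\ref{le1}. For the finite graph $G=D_n(t-1,t)$ and any $0\le i<j\le n-1$, writing $j-i=qt+r$ with $0\le r<t$ gives $q=(j-i-r)/t\le (j-i)/t$, so Lemma~\ref{let-1t} yields $d_G(i,j)\le q+t \le (j-i+t^2)/t$. Applying Lemma~\ref{le1} with $\alpha=t^2$ and $\beta=t$ then gives $t^+(G)\le (n^2/2+t^2(n-1)-1)/t$.

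Plugging this into Theorem~\ref{th1} and simplifying yields $\text{rl}_k(G) \ge (n-1)(k+1-t) - n^2/(2t) + 1/t$, a concave quadratic in $n$ whose maximum is attained at $n_0 = t(k+1-t) = tk - t^2 + t$, which is a positive integer thanks to the hypothesis $k\ge t$. Substituting $n_0$ into the quadratic reduces it to the compact form $((n_0-1)^2+1)/(2t) = \frac{t}{2}(k+1-t)^2 - (k+1-t) + \frac{1}{t}$, and expanding this as a polynomial in $k$ produces $\frac{t}{2}k^2 - (t^2-t+1)k + \frac{t^3}{2} - t^2 + \frac{3t}{2} - 1 + \frac{1}{t}$. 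Since $\frac{1}{t} > 0$ and $D_{n_0}(t-1,t)$ is a subgraph of $D(t-1,t)$, this inequality transfers to $D(t-1,t)$ and delivers the claim.

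The main obstacle is the algebraic expansion after substituting $n_0$; passing through the intermediate form $\frac{t}{2}(k+1-t)^2 - (k+1-t) + \frac{1}{t}$ and expanding in the auxiliary variable $m=k+1-t$ keeps the computation tidy and makes it straightforward to verify that the coefficients line up with $P(t)=t^2-t+1$ and $Q(t)=\frac{t^3}{2}-t^2+\frac{3t}{2}-1$. A minor sanity check to keep in mind is that the bound in Lemma~\ref{let-1t}, although proved via paths in the infinite graph, suffices here because Lemma~\ref{le1} only needs an upper bound on $d_G$ and the same paths can be chosen to lie inside $\{0,\ldots,n_0-1\}$ for the optimal $n_0$.
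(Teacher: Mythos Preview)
Your proof is correct and follows essentially the same route as the paper: both combine Lemma~\ref{let-1t} with Lemma~\ref{le1} (taking $\alpha=t^2$, $\beta=t$), feed the resulting bound on $t^+$ into Theorem~\ref{th1}, and optimize at $n=tk+t-t^2$. Your intermediate rewriting of the optimum as $\frac{t}{2}(k+1-t)^2-(k+1-t)+\frac{1}{t}$ via the substitution $m=k+1-t$ makes the algebra cleaner than in the paper, and your closing remark about the finite--versus--infinite distance bound flags a subtlety that the paper passes over silently.
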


\begin{proof}
Let $n,t$ be integers, $t\ge 3$, and let $G=D_n(t-1,t)$. Then, for any integers $i,j$, $0\le i\le j\le n-1$, we have  $d_G(i,j)\leq\lfloor\frac{j-i}{t}\rfloor +t \le \frac{j-i+t^2}{t}$ by Lemma~\ref{let-1t}.

Hence $G$ satisfies the conditions of Lemma~\ref{le1} with $\alpha=t²$ and $\beta=t$ and we have 
$$t^{+}(G)\le \frac{\frac{n^2}{2} -1+ t^2(n-1)}{t}= (n-1)(t+\frac{n}{2t}) +\frac{n}{2t}-\frac 1t .$$

Consequently, by Theorem~\ref{th1}, we obtain that
$$\text{rl}_k(G) \ge (n-1)(k+1) - (n-1)(t+\frac{n}{2t}) -\frac{n}{2t}+\frac 1t = (n-1)(k+1-t-\frac{n}{2t})-\frac{n}{2t}+\frac 1t.$$

The right hand side of the inequality is maximized when $n=tk+t-t^2$ and since $k\ge t$, we get $n\ge 1$. Reporting this value in the above inequality gives
$$\text{rl}_k(D_{tk+t-t^2}(1,t)) \ge (tk+t-t^2-1)(k+1-t-\frac{tk+t-t^2}{2t})-\frac{tk+t-t^2}{2t}+\frac 1t.$$
After simplification, we obtain 
$$\text{rl}_k(D_{tk+t-t^2}(1,t)) \ge \frac{t}{2}k^2 -(t^2-t+1)k +
\frac{t^3}{2}-t^2+\frac 32 t-1+\frac 1t,$$ which concludes the proof.
\end{proof}

\section{Upper bounds}
\subsection{$D(1,2,\dots, t)$}

Recall that a labeling $c$ of vertices of a graph $G$ is a radio $k$-labeling if, for every pair $i,j$ of vertices of $G$,

\begin{equation} \label{eqn1}
\vert c_i-c_j\vert + d_G(i,j)>k,
\end{equation} 

where $c_i$, $c_j$ denote labels of $i$ and $j$ respectively.

\begin{Lemma}\label{lemma1-t}
Let $t\geq 2$ be an integer, $i,j$ a pair of vertices of the graph $G=D(1,2,\dots, t)$ and  let $\vert i-j \vert =qt+r$, where $q,r\in \mathbb{N}$, $0\leq r<t$. Then $d_G(i,j)=q$ if $r=0$ and $d_G(i,j)=q+1$ otherwise.
\end{Lemma}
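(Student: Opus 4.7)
The plan is to prove this in two directions, an upper bound (by exhibiting paths) and a matching lower bound (by bounding the sum of step-sizes).

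For the upper bound, without loss of generality assume $j\ge i$, so $j-i=qt+r$ with $0\le r<t$. When $r=0$, I would exhibit the walk $i,\,i+t,\,i+2t,\dots,i+qt=j$; each edge has length $t\in D$, so this is a valid path of length $q$ in $G$. When $r\ne 0$, I would append one extra edge of length $r$ to obtain the walk $i,\,i+t,\dots,i+qt,\,i+qt+r=j$; since $1\le r\le t-1$ we have $r\in D$, so this is a valid path of length $q+1$.

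For the lower bound, I would consider an arbitrary shortest $i$--$j$ path in $G$, say of length $\ell$, and let $d_1,d_2,\dots,d_\ell$ be the signed differences along successive vertices, so that $\sum_{m=1}^\ell d_m = j-i$ and $|d_m|\in D$ for every $m$, hence $|d_m|\le t$. Then
$$\ell\cdot t \;\ge\; \sum_{m=1}^{\ell}|d_m|\;\ge\;\Bigl|\sum_{m=1}^{\ell}d_m\Bigr|\;=\;|j-i|\;=\;qt+r.$$
So $\ell\ge q+r/t$. If $r=0$ this gives $\ell\ge q$, and if $r>0$ then because $\ell$ is an integer and $q+r/t>q$, we get $\ell\ge q+1$. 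Combined with the upper bound, this yields the claimed equality.

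There is no real obstacle here: the only point to be slightly careful about is to use that every step-length lies in $\{1,2,\dots,t\}$ (which is what makes $D=\{1,\dots,t\}$ special and makes the elementary counting argument work), and to observe that in the $r\ne 0$ case the residue $r$ itself is available as an edge length, which is what allows the single extra step rather than two.
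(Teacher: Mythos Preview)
Your proof is correct and follows essentially the same approach as the paper: the explicit paths $i,i+t,\dots,i+qt$ (plus one extra step of length $r$ when $r\neq 0$) are exactly what the paper uses for the upper bound. The only difference is that where the paper simply asserts ``Clearly there is no shorter $i,j$-path in $G$'', you supply the easy step-size counting argument $\ell t\ge \sum|d_m|\ge |j-i|$, which is a welcome elaboration but not a different idea.
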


\begin{proof}
Suppose that $j>i$. There is a path $P=i,i+t,i+2t, \dots, i+qt$ of length $q$ in $G$. If $r=0$ then $j=i+qt$ and hence $d_G(i,j)\leq q$, else there is an edge between vertices $i+qt,i+qt+r=j$ in $G$ and hence $d_G(i,j)\leq q+1$. Clearly there is no shorter $i,j$-path in $G$.
\end{proof}

\begin{Theorem} \label{thmkeven}
Let $t\geq 2$ be an integer, $k$ be an even positive integer and $G=D(1,2,\dots, t)$. Then
$$
\text{rl}_k(G)\leq \frac t2 k^2+k.  
$$

\end{Theorem}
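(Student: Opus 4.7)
The plan is to exhibit a single explicit periodic labeling and verify the constraints. Set $p = tk + 3$ and define
\[
c(i) = \frac{k}{2}\,(2i \bmod p), \qquad i \in \mathbb{Z}.
\]
Because $k$ is even, $p$ is odd, so $\gcd(2,p)=1$ and the residues $2i \bmod p$ run through all of $\{0,1,\dots,p-1\}$ as $i$ does. Consequently the image of $c$ is $\{0, k/2, k, 3k/2, \dots, (p-1)k/2\}$ and its span equals $\frac{k}{2}(p-1) = \frac{t}{2}k^2 + k$, as required.

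By Lemma~\ref{lemma1-t}, two vertices $i,j$ with $|i-j| > tk$ satisfy $d_G(i,j) > k$, so the radio labeling inequality~(\ref{eqn1}) is vacuous for them. It therefore suffices to check that $|c(i+d) - c(i)| \geq k + 1 - \lceil d/t \rceil$ for every $i \in \mathbb{Z}$ and every $d \in \{1, 2, \dots, tk\}$. Writing $a = 2i \bmod p$ and tracking which of the intervals $[0,p)$, $[p,2p)$, $[2p,3p)$ contains $a + 2d$, a short calculation shows that $|c(i+d) - c(i)|$ equals one of
\[
kd, \qquad k\,|d - p/2|, \qquad k(p - d).
\]
The first is trivially $\geq k+1-\lceil d/t\rceil$ since $kd \ge k$ and $\lceil d/t\rceil \ge 1$. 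For the other two I would write $d = qt + r$ with $0 \le r < t$ (so $\lceil d/t \rceil$ equals $q$ if $r=0$ and $q+1$ otherwise) and reduce the desired inequality to a linear relation in $q$ and $r$, easily checked by direct arithmetic.

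The main obstacle is the boundary regime $d \approx p/2$, where $k|d - p/2|$ becomes small and the verification splits further according to the sign of $d - p/2$. At the tight pairs $q = k/2$ with $r \in \{0, 1\}$ (and also at $q = k/2 + 1$, $r = 0$ when $t = 2$) the inequality is met with equality, which matches the lower bound of Proposition~\ref{p12t} up to the additive $+k$ term and shows that this construction is essentially optimal in its family.
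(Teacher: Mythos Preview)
Your construction is exactly the paper's: the closed form $c(i)=\tfrac{k}{2}(2i\bmod p)$ with $p=tk+3$ reproduces, after the obvious index shift, the two arithmetic progressions in Table~\ref{patternkeven}, and your three-case split according to whether $a+2d$ lies in $[0,p)$, $[p,2p)$, or $[2p,3p)$ is the same case analysis the paper carries out (their Cases~1--3 correspond to your values $k(p-d)$, $kd$, and $k|d-p/2|$ at the extremes $0$, $k$, and $k/2$). The verification you leave as ``easily checked'' is indeed routine --- the only nontrivial case is $|d-p/2|=\tfrac12$, i.e.\ $d\in\{\tfrac{t}{2}k+1,\tfrac{t}{2}k+2\}$, where $|c_i-c_j|=\tfrac{k}{2}$ and $\lceil d/t\rceil=\tfrac{k}{2}+1$; your closing remark slightly misidentifies the tight $(q,r)$ pairs (they are $r\in\{1,2\}$ for $t>2$, not $r\in\{0,1\}$), but this is commentary and does not affect the argument.
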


\begin{proof}
First we define a periodical pattern of labels. For vertices $1,2,\dots, tk+3$ of the distance graph $G$ we set labels using the following table.

\begin{table}[ht]
\small
\centering
$$\begin{array}{|c|c|c|c|c|c|c|c|c|c|c|c|c|} 
\hline
\mbox{vertex} & 1 & 2 & 3 & 4 & \dots & \frac t2 k+ 2 & \frac t2 k+3 & \frac t2 k+4 & \frac t2 k+5 &\dots & tk+3 \\
\hline
\mbox{label} & 0 & k & 2k & 3k & \dots & \left( \frac t2 k+1 \right) k & \frac k2 & \frac k2 +k & \frac k2 + 2k & \dots & \frac k2 + \left( \frac t2 k\right) k \\
\hline
\end{array}$$
\normalsize
\caption{\label{patternkeven}A periodical pattern for $G=D(1,2,\dots, t)$ and even $k$.}
\end{table}

Then we can define a labeling $c$ of all vertices of $G$ setting $c(a+b(tk+3))=c(a)$, $ a\in \{1,2,\dots, tk+3\}$ and $b\in \mathbb{Z}$, i.e., we repeat the defined periodical pattern for all vertices of $G=D(1,2,\dots, t)$.

Now we show that the labeling $c$ is a radio $k$-labeling of $G$, i.e., the inequality (\ref{eqn1}) holds for every $i,j\in V(G)$. Note that the length of the pattern is $tk+3$. For every pair $i,j$ of vertices of $G$ with $\vert i-j\vert \geq tk+3$, it holds that $d_G(i,j)>k$. Therefore it suffices to prove that there is no conflict in labeling $c$ between vertices in two consecutive copies of the pattern. If $\vert c_i-c_j\vert>k$ then the inequality (\ref{eqn1}) trivially holds. Now we consider the following possibilities.

\bmi
\item[{\sl Case 1:}] {\sl $\vert c_i-c_j\vert=0$}. \\ By the definition of the pattern given by Table \ref{patternkeven} it follows that $\vert i-j\vert =tk+3$. Then $d_G(i,j)>k$, implying that the inequality (\ref{eqn1}) holds.
\item[{\sl Case 2:}] {\sl $\vert c_i-c_j\vert =k$}. \\ Then trivially $\vert i-j\vert >0$ and hence $d_G(i,j)>0$. This implies that the inequality (\ref{eqn1}) holds.
\item[{\sl Case 3:}] {\sl $0<\vert c_i-c_j\vert<k$}. \\ 
   From pattern given by Table \ref{patternkeven} we obtain $\vert c_i-c_j\vert = \frac k2$, and $\vert i-j\vert = \frac t2k+1$ or $\vert i-j\vert = \frac t2 k+2$. Then, by Lemma \ref{lemma1-t}, $d_G(i,j)>\frac k2$. Thus we have $\vert c_i-c_j\vert + d_G(i,j)>\frac k2+\frac k2=k$ and the inequality (\ref{eqn1}) holds.
\emi
We have shown that the defined labeling is a radio $k$-labeling of $G$. The maximum used label is $\frac t2 k^2+k.$
\end{proof}

\begin{Theorem}\label{thmkodd}
Let $t\geq 2$ be a positive integer, $k$ be an odd positive integer and let $G=D(1,2,\dots, t)$. Then
$$
\text{rl}_k(G)\leq \frac t2 k^2+\frac t2 k.  
$$
\end{Theorem}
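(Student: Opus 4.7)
The strategy parallels Theorem \ref{thmkeven}: construct an explicit periodic labeling of $\mathbb{Z}$ and verify (\ref{eqn1}) by a case analysis on $|c_i-c_j|$. I would define a pattern on a window of $p>tk$ consecutive vertices and extend it periodically by $c(a+bp)=c(a)$, so that pairs of vertices at index-distance $\geq p$ are at graph distance $\geq k+1$ by Lemma \ref{lemma1-t} and automatically satisfy (\ref{eqn1}). Within one period, as in the even case, the labels will be formed from arithmetic progressions of common difference $k$, which makes every pair at graph distance $1$ satisfy (\ref{eqn1}) trivially.

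The twist for odd $k$ is that the shift of $k/2$ used in Table \ref{patternkeven} is not an integer, so the two-segment pattern of Theorem \ref{thmkeven} cannot be transposed verbatim. I would instead partition the period into more than two blocks, with integer offsets $0,1,\ldots,r$, tuning the block sizes so that the maximum used label is exactly $\tfrac{t}{2}k^2+\tfrac{t}{2}k=\tfrac{tk(k+1)}{2}$. An attractive alternative is a cyclic construction $c(i)=a\cdot i\bmod N$ with $N=\tfrac{tk(k+1)}{2}+1$ and a carefully chosen multiplier $a$ coprime to $N$, which immediately yields labels of span $N-1=\tfrac{t}{2}k^2+\tfrac{t}{2}k$ and reduces the verification of (\ref{eqn1}) to showing that $\min(am\bmod N,\,N-am\bmod N)\geq k+1-\lceil m/t\rceil$ for every $m\in\{1,\ldots,tk\}$ (invoking Lemma \ref{lemma1-t} to convert index-gap $m$ into graph distance $\lceil m/t\rceil$).

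The case analysis on $|c_i-c_j|$ will follow the three cases of Theorem \ref{thmkeven}: the trivial cases $|c_i-c_j|\geq k$ and $|c_i-c_j|=0$ (the latter handled by $p>tk$), and the delicate intermediate range $0<|c_i-c_j|<k$. In the latter, the construction must guarantee that any pair with label gap $\delta<k$ has index-distance so large that $d_G(i,j)\geq k+1-\delta$. This is where the main difficulty lies: naive two-block patterns with shift $1$ or $(k-1)/2$ produce vertices whose labels differ by only $1$ at index-gap roughly $\tfrac{tk}{2}+1$, hence graph distance about $\tfrac{k}{2}+1$, which is too small to satisfy the radio condition. Overcoming this obstacle, either by carefully refining the block structure and offsets so that the smallest label gaps are pushed to the largest index-gaps within a period, or by selecting an appropriate multiplier $a$ in the cyclic construction, is the core technical step of the proof.
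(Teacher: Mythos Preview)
Your framework is right --- periodic pattern of length $p>tk$, case split on $|c_i-c_j|$, Lemma~\ref{lemma1-t} to convert index-gaps to graph distances --- but you have missed the key observation that makes the proof short, and as written you have not actually produced a labeling.

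You correctly note that the shift $k/2$ from Table~\ref{patternkeven} is not an integer when $k$ is odd, and you then abandon the two-block scheme in favour of multi-block or cyclic constructions that you do not carry out. The paper's trick is simpler: keep two blocks, but change the common difference from $k$ to $l=k+1$. Since $k$ is odd, $l/2=(k+1)/2$ is an integer, so the offset between the two arithmetic progressions is integral. Concretely, on a period of length $tk+1$ (not $tk+3$), the first block carries labels $0,l,2l,\ldots$ and the second block carries $l/2,\,l/2+l,\,l/2+2l,\ldots$; the block boundary is placed near $\tfrac{t}{2}k$ (with a half-integer adjustment when $t$ is odd). The largest label is $(\tfrac{t}{2}k)l=\tfrac{t}{2}k(k+1)=\tfrac{t}{2}k^2+\tfrac{t}{2}k$, exactly the target span.

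With this step size the case analysis is immediate. Consecutive labels within a block differ by $l=k+1>k$, so the only nontrivial case is $|c_i-c_j|=l/2=(k+1)/2$. Such pairs have index-gap $\tfrac{t}{2}k+O(1)$, which by Lemma~\ref{lemma1-t} forces $d_G(i,j)\ge \tfrac{k-1}{2}+1$, and $(k+1)/2+(k+1)/2>k$ as required. Your worry about ``labels differing by only $1$'' never arises, because $l/2$ and $l-l/2$ coincide; there is exactly one small label-gap to check, not a range of them.

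Your cyclic idea $c(i)=a\cdot i\bmod N$ could in principle be made to work, but you have not exhibited a multiplier $a$ nor verified the required inequalities, so the proposal as it stands is a plan without its ``core technical step''.
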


\begin{proof}

First we define periodical patterns of labels. For even $t$, we set labels of vertices $1,2,\dots, tk+1$ of the distance graph $G$ by Table \ref{patternkodde}, where $l=k+1$.

\small
\begin{table}[ht]
\centering
$$\begin{array}{|c|c|c|c|c|c|c|c|c|c|c|c|c|} 
\hline
\mbox{vertex} & 1 & 2 & 3 & 4 & \dots & \frac t2 k+1 & \frac t2 k+2 & \frac t2 k+3 & \frac t2 k+4 &\dots & tk+1 \\
\hline
\mbox{label} & 0 & l & 2l & 3l & \dots & \left( \frac t2 k \right) l & \frac l2 & \frac l2 +l & \frac l2 + 2l & \dots & \frac l2 + \left( \frac t2 k - 1\right) l \\
\hline
\end{array}$$

\caption{\label{patternkodde}A periodical pattern for $G=D(1,2,\dots, t)$, odd $k$ and even $t$.}
\end{table}
\normalsize

Analogously, for odd $t$, we set labels of vertices $1,2,\dots, tk+ 1$ of the distance graph $G$ by Table \ref{patternkoddo}, where $l=k+1$.

\small
\begin{table}[ht]
\centering
$$\begin{array}{|c|c|c|c|c|c|c|c|c|c|c|c|c|} 
\hline
\mbox{vertex} & 1 & 2 & 3 & 4 & \dots & \frac t2 k+\frac 12 & \frac t2 k+\frac 32 & \frac t2 k+\frac 52 & \frac t2 k+\frac 72 &\dots & tk+1 \\
\hline
\mbox{label} & 0 & l & 2l & 3l & \dots & \left( \frac t2 k -\frac 12\right) l & \frac l2 & \frac l2 +l & \frac l2 + 2l & \dots & \frac l2 + \left( \frac t2 k - \frac 12 \right) l \\
\hline
\end{array}$$
\caption{\label{patternkoddo}A periodical pattern for $G=D(1,2,\dots, t)$, odd $k$ and odd $t>1$.}
\end{table}
\normalsize

Then, for any parity of $t$, we can define a labeling $c$ of all vertices of $G$ setting $c(a+b(tk+1))=c(a)$, $ a\in \{1,2,\dots, tk+1\}$ and $b\in \mathbb{Z}$, i.e., we repeat the defined periodical pattern for all vertices of $G=D(1,2,\dots, t)$.

We show that the labeling $c$ is a radio $k$-labeling of $G$, i.e., the inequality (\ref{eqn1}) holds for every $i,j\in V(G)$. Note that the length of both patterns is $tk+1$. For every pair $i,j$ of vertices of $G$ with $\vert i-j\vert \geq tk+1$, it holds that $d_G(i,j)>k$. Therefore it suffices to prove that there is no conflict in labeling $c$ between vertices in two consecutive copies of the pattern. If $\vert c_i-c_j\vert>k$ then the inequality (\ref{eqn1}) trivially holds. Now we consider the following possibilities.

\bmi
\item[{\sl Case 1:}] {\sl $\vert c_i-c_j\vert=0$}. \\ By the definition of the patterns given by Tables \ref{patternkodde} and \ref{patternkoddo} it follows that $\vert i-j\vert =tk+1$. Thus $d_G(i,j)>k$, implying that the inequality (\ref{eqn1}) holds.
\item[{\sl Case 2:}] {\sl $\vert c_i-c_j\vert =k$}. \\ Then trivially $\vert i-j\vert >0$ and hence $d_G(i,j)>0$. This implies that the inequality (\ref{eqn1}) holds.
\item[{\sl Case 3:}] {\sl $0<\vert c_i-c_j\vert<k$}. \\ From the patterns given by Tables \ref{patternkodde} and \ref{patternkoddo} we obtain $\vert c_i-c_j\vert = \frac l2$. Now we have two subcases depending on parity of $t$.
\bmi
   \item[{\sl \hspace*{3mm} Subcase 3.1:}] {\sl $t$ is even}. \\
   From pattern given by Table \ref{patternkodde} we get $\vert i-j\vert = \frac t2k=\frac{k-1}2t+\frac t2$ or $\vert i-j\vert = \frac t2 k+1=\frac{k-1}2 t+\frac t2+1$. Then, by Lemma \ref{lemma1-t}, $d_G(i,j)>\frac {k-1}2$. Since $l=k+1$, we have $\vert c_i-c_j\vert + d_G(i,j)>\frac {k+1}2+\frac {k-1}2=k$ and the inequality (\ref{eqn1}) holds.
   \item[{\sl \hspace*{3mm} Subcase 3.2:}] {\sl $t$ is odd}. \\
   From pattern given by Table \ref{patternkoddo} we have $\vert i-j\vert = \frac t2 k+\frac 12=\frac{k-1}2 t+ \frac t2+\frac 12$ or $\vert i-j\vert = \frac t2 k+\frac 32= \frac{k-1}2 t+ \frac t2+\frac 32$ or (for $t>1$) $\vert i-j \vert = \frac t2 k - \frac 12 = \frac{k-1}2 t+\frac t2-\frac 12$. In each of these  possibilities we have $d_G(i,j)>\frac{k-1}2$ by Lemma \ref{lemma1-t}. Since $l=k+1$, we have $\vert c_i-c_j\vert + d_G(i,j)>\frac {k+1}2+\frac {k-1}2=k$ and the inequality (\ref{eqn1}) holds.

\emi
\emi
We have shown that the defined labeling is a radio $k$-labeling of $G$. The maximum used label is $\frac t2 k^2+\frac t2 k.$
\end{proof}

Now we consider the particular case when $t=2$.
Lower and upper bounds for $\text{rl}_k(D(1,2))$ can be derived from radio labelings of the square of paths and cycles. Note that the square of a graph $G$ is the graph with $V(G^2)=V(G)$ in which two vertices are adjacent if their distance in $G$ is at most two, and is denoted by $G^2$.
Let $P_n^2$ and $C_n^2$ be the square of the path and of the cycle of order $n$, respectively.
The graph $P_n^2$ is a subgraph of $D(1,2)$, and one can use any radio $k$-labeling of $C_n^2$ as a pattern to label the distance graph $D(1,2)$.
Liu and Xie showed that $\text{rl}_k (P_{2k+1}^2)=k^2+2$ ~\cite{LiXie1} and $\text{rl}_k (C_{4k+1}^2)=k^2+2k$~\cite{LiXie2}. Thus we can obtain the following bounds for $\text{rl}_k (D(1,2))$

$$
k^2+2\leq \text{rl}_{k}(D(1,2))\leq k^2+2k.
$$

From Theorem \ref{thmkeven} and Theorem \ref{thmkodd} we obtain the following statement which strengthens the upper bound mentioned in the previous inequality.

\begin{corollary}
For any positive integer $k$, $$ k^2+2 \leq \text{rl}_{k}(D(1,2))\le  k^2+k.$$
\end{corollary}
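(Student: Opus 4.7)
The plan is essentially to observe that both inequalities are immediate consequences of results stated earlier in the excerpt, so the proof is a one-paragraph assembly rather than an argument requiring new ideas. The only issue is to state clearly why each bound follows.

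For the lower bound, I would invoke the discussion immediately preceding the corollary: the graph $P_{2k+1}^2$ embeds as an induced subgraph of $D(1,2)$ (namely, the finite distance graph $D_{2k+1}(1,2)$ is isomorphic to $P_{2k+1}^2$, since two integers in $\{0,1,\dots,2k\}$ are at distance at most $2$ in the path $P_{2k+1}$ precisely when their difference lies in $\{1,2\}$). Since the restriction of any radio $k$-labeling of $D(1,2)$ to the vertex set of $D_{2k+1}(1,2)$ is a radio $k$-labeling of $P_{2k+1}^2$, the Liu--Xie equality $\text{rl}_k(P_{2k+1}^2)=k^2+2$ yields $\text{rl}_k(D(1,2))\ge k^2+2$.

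For the upper bound, I would specialize Theorems \ref{thmkeven} and \ref{thmkodd} to $t=2$. When $k$ is even, Theorem \ref{thmkeven} gives $\text{rl}_k(D(1,2))\le \tfrac{t}{2}k^2+k = k^2+k$, and when $k$ is odd, Theorem \ref{thmkodd} gives $\text{rl}_k(D(1,2))\le \tfrac{t}{2}k^2+\tfrac{t}{2}k = k^2+k$. In both parities the bound coincides, which is exactly what the corollary claims.

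There is essentially no obstacle here: the proof is purely a consolidation step, and the only thing that requires attention is making the subgraph embedding explicit so that the inherited radio $k$-labeling of $P_{2k+1}^2$ is actually valid (i.e., that distances in $P_{2k+1}^2$ agree with distances in $D(1,2)$ on the relevant vertex set, which they do because any shortest path between vertices $0$ and $2k$ in $D(1,2)$ stays within $\{0,1,\dots,2k\}$).
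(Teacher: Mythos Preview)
Your proposal is correct and follows essentially the same approach as the paper: the lower bound is inherited from the Liu--Xie value $\text{rl}_k(P_{2k+1}^2)=k^2+2$ via the subgraph inclusion $P_{2k+1}^2=D_{2k+1}(1,2)\subseteq D(1,2)$, and the upper bound is the specialization $t=2$ of Theorems~\ref{thmkeven} and~\ref{thmkodd}. Your final remark about distances agreeing is more than is needed---the bare inequality $d_{P_{2k+1}^2}(u,v)\ge d_{D(1,2)}(u,v)$, automatic for any subgraph, already suffices to transfer the radio $k$-labeling---but it is not wrong.
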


\subsection{$D(1,t)$}
Now we focus on the distance graphs $G=D(1,t)$. For even $k$ we did not find any improvement of the upper bound for $\text{rl}_k(G)$ given by Theorem \ref{thmkeven}, but for odd $k$ we can improve the upper bound for $\text{rl}_k(G)$ from Theorem \ref{thmkodd}.

\begin{Theorem}\label{thm D(1,t) t odd} Let $t\geq 3$ and $k\geq 1$ be odd integers, let $G$ be a distance graph $D(1,t)$. Then
$\text{rl}_{k}(G)\leq \frac t2 k^{2}-\frac 12$. 
\end{Theorem}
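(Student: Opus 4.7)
The plan is to exhibit an explicit periodic radio $k$-labeling of $D(1,t)$ whose span realizes $\frac{t}{2}k^{2}-\frac{1}{2}$. Since $t$ and $k$ are both odd, $N := tk+1$ is even. I would split one period into two equal halves of size $\frac{tk+1}{2}$ and label each half by an arithmetic progression of common difference $k$ (rather than $k+1$, as in Theorem~\ref{thmkodd}), shifting the second half by $\frac{k-1}{2}$. Explicitly, for $i \in \{1,\dots,tk+1\}$ I would set $c(i) = (i-1)k$ when $1 \leq i \leq \frac{tk+1}{2}$ and $c(i) = \frac{k-1}{2} + \bigl(i-\frac{tk+3}{2}\bigr)k$ when $\frac{tk+3}{2} \leq i \leq tk+1$, and extend via $c(i+b(tk+1)) = c(i)$ for all $b \in \Z$. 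The largest label is then $\frac{k-1}{2} + \frac{tk-1}{2}\cdot k = \frac{tk^{2}-1}{2}$, matching the claimed bound.

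To verify inequality~(\ref{eqn1}), note first that if $|u-v| \geq tk+1$ then Lemma~\ref{ldist} gives $d(u,v) \geq k+1$, so the constraint is automatic. Otherwise set $a := |u-v| \in \{1,\dots,tk\}$, and split into cases according to which halves of which periods contain the representatives of $u$ and $v$ modulo $N$. When both lie in the same half, $|c(u)-c(v)|$ is a positive multiple of $k$, hence at least $k$, and (\ref{eqn1}) follows at once from $d(u,v) \geq 1$. When they lie in different halves, writing $a = \frac{tk+1}{2} + b$ yields $|c(u)-c(v)| = \bigl|bk \pm \frac{k-1}{2}\bigr|$, the sign depending on which half contains the smaller vertex.

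A short inspection shows that this quantity is at least $k$ unless $b \in \{-1,0,1\}$, leaving exactly three critical values $a \in \{\frac{tk-1}{2},\,\frac{tk+1}{2},\,\frac{tk+3}{2}\}$ with respective label gaps $\frac{k+1}{2},\,\frac{k-1}{2},\,\frac{k+1}{2}$. For each such $a$ I would apply Lemma~\ref{ldist}: writing $t = 2m+1$ and $k = 2p+1$, the central case $a = \frac{tk+1}{2} = p(2m+1)+(m+1)$ has quotient $q = p$ and remainder $r = m+1$ with the balanced property $r = t+1-r$, yielding $d(u,v) = p+m+1 = \frac{k+t}{2}$, which meets the required threshold $\frac{k+3}{2}$ precisely because $t \geq 3$. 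The two neighbouring values $a = \frac{tk-1}{2}$ and $a = \frac{tk+3}{2}$ each give $d(u,v) = \frac{k+t-2}{2}$, meeting the threshold $\frac{k+1}{2}$ for the same reason.

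The main obstacle is the bookkeeping: different half-to-half pairings produce different signs in the formula for $c(u)-c(v)$, so one must check each pairing separately to pinpoint where the small gap $\frac{k+1}{2}$ actually arises, and one must handle the branch of Lemma~\ref{ldist} where the remainder $r$ wraps to $0$ (occurring for $t = 3$ at $a = \frac{tk+3}{2}$, where $q$ increases by one and the value $d = \frac{k+t-2}{2}$ is nevertheless preserved). Once these critical distances are verified, all remaining pairs are covered by the automatic bound $|c(u)-c(v)| \geq k$, and $c$ is a radio $k$-labeling of span $\frac{tk^{2}-1}{2} = \frac{t}{2}k^{2}-\frac{1}{2}$.
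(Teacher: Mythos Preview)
Your construction is identical to the paper's: the same period $tk+1$, the same two arithmetic progressions of step $k$ shifted by $\frac{k-1}{2}$, and the same span $\frac{t}{2}k^2-\frac{1}{2}$. Your verification is also essentially the paper's, just organized slightly differently (you case on which half each vertex lies in and then isolate the critical offsets $a\in\{\frac{tk-1}{2},\frac{tk+1}{2},\frac{tk+3}{2}\}$, whereas the paper cases directly on $|c_i-c_j|\in\{\frac{k-1}{2},\frac{k+1}{2}\}$); both reduce to the same three distance computations via Lemma~\ref{ldist}, including the $t=3$ wrap-around you flag.
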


\begin{proof}
First we define a periodical pattern of labels. For vertices $1,2,\dots, tk+1$ of the distance graph $G=D(1,t)$ we set labels using the following table.

\begin{table}[ht]
\centering
\small
$$\begin{array}{|c|c|c|c|c|c|c|c|c|c|c|c|c|} 
\hline
\mbox{vertex} & 1 & 2 & 3 & 4 &\dots& \frac t2 k+ \frac 12 & \frac t2 k+\frac 32 & \frac t2 k+ \frac 52 & \frac t2 k+\frac 72 &\dots& tk+1 \\
\hline
\mbox{label} & 0 & k & 2k & 3k &\dots& \left( \frac t2 k-\frac 12 \right) k & \frac{k-1}2 & \frac{k-1}2 +k & \frac{k-1}2 + 2k &\dots& \frac{k-1}2 + \left( \frac t2 k - \frac 12 \right) k \\
\hline
\end{array}$$
\normalsize

\caption{\label{toddkodd}A periodical pattern for $G=D(1,t)$ and odd $t\geq 3$.}
\end{table}

Then we can define a labeling $c$ of all vertices of $G$ setting $c(a+b(tk+1))=c(a)$, $ a\in \{1,2,\dots, tk+1\}$ and $b\in \mathbb{Z}$, i.e., we repeat the defined periodical pattern for all vertices of $G$.

Now we show that the labeling $c$ is a radio $k$-labeling of $G$, i.e., the inequality (\ref{eqn1}) holds for every $i,j\in V(G)$. Note that the length of the pattern is $tk+1$ and clearly $d_G(i,j)>k$ for every $i,j$ with $\vert i-j\vert \geq tk+1$. Therefore it suffices to prove that there is no conflict in labeling $c$ between vertices in two consecutive copies of the pattern.
If $\vert c_i-c_j\vert>k$ then the inequality (\ref{eqn1}) trivially holds. Now we consider the following possibilities.
\vspace{3mm}

\bmi
\item[{\sl Case 1:}] $\vert c_i-c_j\vert = 0$. From the definition of the pattern given by Table \ref{toddkodd} it follows that $\vert i-j\vert =tk+1$. By Lemma \ref{ldist}, $d_G(i,j)> k$ and we are done.
\item[{\sl Case 2:}] $\vert c_i-c_j \vert =k$. Clearly $d_G(i,j)>0$ and then $\vert c_i-c_j \vert + d_G(i,j)>k$.
\item[{\sl Case 3:}] $0<\vert c_i-c_j \vert<k$. From Table \ref{toddkodd} we have $c_j=c_i \pm \frac{k\pm 1}2$. We consider the following possibilities.
\begin{itemize}
\item[a)] $\vert c_i-c_j \vert = \frac{k-1}2$. From Table \ref{toddkodd} we obtain $\vert i-j\vert = \frac t2 k + \frac 12 = \frac{k-1}2 t + \frac t2 + \frac 12$. From Lemma~\ref{ldist} for $q=\frac {k-1}2$ and $r=\frac t2 + \frac 12$, it follows that $d_G(i,j)=\min \left\{ \frac {k-1}2 + \frac t2+ \frac 12; \, \frac{k-1}2 + 1 + t - (\frac t2+\frac 12 ) \right\}$. Then $d_G(i,j)>\frac {k+1}2$ for $t>1$. Thus we have $\vert c_i-c_j \vert + d_G(i,j)>\frac {k-1}2+\frac {k+1}2 =k$.
\item[b)] $\vert c_i-c_j \vert = \frac{k+1}2$. From Table \ref{toddkodd} we have $\vert i-j \vert = \frac t2 k + \frac 12 \pm 1$.
\begin{itemize}
\item[-] $\vert i-j\vert =\frac t2 k +\frac 12 -1 = \frac{k-1}2 t +\frac t2 - \frac 12$. By Lemma \ref{ldist} for $q=\frac {k-1}2$ and $r=\frac t2 - \frac 12$, it follows that $d_G(i,j)=\min \left\{ \frac {k-1}2 + \frac t2- \frac 12; \, \frac{k-1}2 + 1 + t - (\frac t2-\frac 12)  \right\}$. Hence we have $d_G(i,j)>\frac {k-1}2$ for $t>1$.
\item[-] $\vert i-j \vert = \frac t2 k + \frac 12 +1 = \frac{k-1}2 t +\frac t2 + \frac 32$. If $t=3$ then $q=\frac{k-1}2 +1$ and $r=0$ and, by Lemma \ref{ldist}, $d_G(i,j)= \frac{k-1}2 +1$. If $t>3$ then,
by Lemma \ref{ldist} for $q=\frac {k-1}2$ and $r=\frac t2 + \frac 32$, it follows that $d_G(i,j)=\min \left\{ \frac {k-1}2 + \frac t2+ \frac 32; \, \frac{k-1}2 + 1 + t - (\frac t2+\frac 32)  \right\}$. Thus $d_G(i,j)>\frac {k-1}2$ for $t>1$.
\end{itemize}
Now we have $\vert c_i-c_j \vert + d_G(i,j)>\frac {k+1}2+\frac {k-1}2 =k$ and we are done.
\end{itemize}
\emi
We have shown that the defined labeling is a radio $k$-labeling of $G$. The maximum used label is $\frac{k-1}2 + \left( \frac t2 k - \frac 12 \right)k=\frac t2 k^{2}-\frac 12$.
\end{proof}

\begin{Theorem} \label{thm D(1,t) t even}Let $t\geq 4$ be an even integer and $k\geq 1$ be and odd integer, let $G$ be a distance graph $D(1,t)$. Then
$\text{rl}_{k}(G)\leq \frac t2 k^{2}$.
\end{Theorem}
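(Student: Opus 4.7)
The plan is to mirror the construction of Theorem~\ref{thm D(1,t) t odd}: build a periodic pattern of length $tk+1$ whose maximum label is $\frac{t}{2}k^{2}$, extend it by periodicity, and verify the radio $k$-labeling inequality by case analysis on $|c_i-c_j|$, using Lemma~\ref{ldist} for the distance computations.

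Concretely, on the window of positions $1,2,\ldots,tk+1$ I would assign to position $p\in\{1,\ldots,\frac{t}{2}k+1\}$ the label $(p-1)k$ (so the labels used are $0,k,2k,\ldots,\frac{t}{2}k^{2}$), and to position $p\in\{\frac{t}{2}k+2,\ldots,tk+1\}$ the label $\frac{k-1}{2}+(p-\frac{t}{2}k-2)k$. Since $t$ is even, $\frac{t}{2}k+1$ is a genuine integer position, and since $k$ is odd, $\frac{k-1}{2}$ is an integer, so every vertex receives an integer label. The largest label used is $\frac{t}{2}k\cdot k=\frac{t}{2}k^{2}$, while all second-group labels are at most $\frac{t}{2}k^{2}-\frac{k+1}{2}$. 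The labeling $c$ of $V(G)$ is then obtained by $c(a+b(tk+1))=c(a)$ for $a\in\{1,\ldots,tk+1\}$ and $b\in\Z$.

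To check that $c$ is a radio $k$-labeling, I would copy the scheme of Theorem~\ref{thm D(1,t) t odd}: first observe via Lemma~\ref{ldist} that $d_G(i,j)>k$ as soon as $|i-j|\geq tk+1$, so only pairs with $|i-j|\leq tk$ must be examined. The subcases $|c_i-c_j|=0$ and $|c_i-c_j|=k$ are handled exactly as in that theorem. For the remaining subcase $0<|c_i-c_j|<k$, the two label-classes (multiples of $k$ on one side, shifts of $\frac{k-1}{2}$ on the other) force $|c_i-c_j|\in\{\tfrac{k-1}{2},\tfrac{k+1}{2}\}$, and a direct inspection of the pattern shows that these label-differences correspond respectively to $|i-j|=\frac{t}{2}k+1$ and $|i-j|=\frac{t}{2}k$.

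The main, and essentially only, technical step is then to apply Lemma~\ref{ldist} to these two values of $|i-j|$. Writing $\frac{t}{2}k=\frac{k-1}{2}\cdot t+\frac{t}{2}$ and $\frac{t}{2}k+1=\frac{k-1}{2}\cdot t+(\frac{t}{2}+1)$ (both valid since $t\geq 4$ ensures $\frac{t}{2}+1<t$), Lemma~\ref{ldist} yields $d_G(i,j)=\frac{k-1}{2}+\frac{t}{2}$ in both situations. Plugging this into $|c_i-c_j|+d_G(i,j)>k$ reduces in each subcase to the inequality $\frac{t}{2}>1$, which is where the hypothesis $t\geq 4$ is essential; that is the only real obstacle in the proof, and since the inequality is strict, we are done and the span $\frac{t}{2}k^{2}$ is an upper bound for $\text{rl}_k(G)$.
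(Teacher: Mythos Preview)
Your construction and overall strategy coincide with the paper's: the same periodic pattern of length $tk+1$, the same case split on $|c_i-c_j|$, and the same use of Lemma~\ref{ldist} to compute $d_G(i,j)=\frac{k-1}{2}+\frac{t}{2}$ for $|i-j|\in\{\frac{t}{2}k,\frac{t}{2}k+1\}$.

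There is one inaccuracy. You assert that $|c_i-c_j|=\frac{k-1}{2}$ forces $|i-j|=\frac{t}{2}k+1$ and $|c_i-c_j|=\frac{k+1}{2}$ forces $|i-j|=\frac{t}{2}k$. That is true for pairs inside a single period, but you must also check pairs straddling two consecutive periods (which you yourself announce). For instance, the vertex at position $\frac{t}{2}k+2$ (label $\frac{k-1}{2}$) and the vertex at position $tk+2$ (label $0$, first vertex of the next period) have $|c_i-c_j|=\frac{k-1}{2}$ but $|i-j|=\frac{t}{2}k$; symmetrically, $|c_i-c_j|=\frac{k+1}{2}$ also occurs with $|i-j|=\frac{t}{2}k+1$. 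So all four combinations arise. The paper avoids this slip by not claiming any specific pairing: it simply records $|c_i-c_j|\in\{\frac{k-1}{2},\frac{k+1}{2}\}$ and $|i-j|\in\{\frac{t}{2}k,\frac{t}{2}k+1\}$, shows $d_G(i,j)>\frac{k+1}{2}$ for both values of $|i-j|$ (using $t\ge 4$), and concludes $|c_i-c_j|+d_G(i,j)>\frac{k-1}{2}+\frac{k+1}{2}=k$. Since you correctly compute $d_G(i,j)=\frac{k-1}{2}+\frac{t}{2}$ for both position differences, your argument is easily repaired in the same way; the worst case is $|c_i-c_j|=\frac{k-1}{2}$ with either $|i-j|$, giving $\frac{k-1}{2}+\frac{k-1}{2}+\frac{t}{2}>k$, i.e.\ $\frac{t}{2}>1$, exactly as you state.
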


\begin{proof}
First we define a periodical pattern of labels. For vertices $1,2,\dots, tk+1$ of the distance graph $G$ we set labels using the following table.

\begin{table}[ht]
\centering
\small
$$\begin{array}{|c|c|c|c|c|c|c|c|c|c|c|c|c|} 
\hline
\mbox{vertex} & 1 & 2 & 3 & 4 & \dots & \frac t2 k+ 1 & \frac t2 k+2 & \frac t2 k+3 & \frac t2 k+4 &\dots & tk+1 \\
\hline
\mbox{label} & 0 & k & 2k & 3k & \dots & \left( \frac t2 k \right) k & \frac {k-1}2 & \frac {k-1}2 +k & \frac {k-1}2 + 2k & \dots & \frac {k-1}2 + \left( \frac t2 k -1\right) k \\
\hline
\end{array}$$
\normalsize
\caption{\label{tevenkodd}A periodical pattern for $G=D(1,t)$ and even $t$.}
\end{table}

Then we can define a labeling $c$ of all vertices of $G$ setting $c(a+b(tk+1))=c(a)$, $ a\in \{1,2,\dots, tk+1\}$ and $b\in \mathbb{Z}$, i.e., we repeat the defined periodical pattern for all vertices of $G=D(1,t)$.

Now we show that the labeling $c$ is a radio $k$-labeling of $G$, i.e., the inequality (\ref{eqn1}) holds for every $i,j\in V(G)$. Note that the length of the pattern is $tk+1$ and clearly $d_G(i,j)>k$ for every $i,j\in \mathbb{Z}$ with $\vert i-j\vert \geq tk+1$. Therefore it suffices to prove that there is no conflict in labeling $c$ between vertices in two consecutive copies of the pattern.
If $\vert c_i-c_j\vert>k$ then the inequality (\ref{eqn1}) trivially holds. Now we consider the following possibilities.

\vspace{3mm}

\bmi
\item[{\sl Case 1}:] $\vert c_i-c_j\vert = 0$. From the definition of the pattern given by Table \ref{tevenkodd} it follows that $\vert i-j\vert =tk+1$. Then $d_G(i,j)> k$ by Lemma \ref{ldist} and the inequality (\ref{eqn1}) holds. 
\item[{\sl Case 2}:] $\vert c_i-c_j \vert =k$. Clearly $d_G(i,j)>0$ and then $\vert c_i-c_j \vert + d_G(i,j)>k$.
\item[{\sl Case 3}:] $0<\vert c_i-c_j \vert<k$. From Table \ref{tevenkodd} we have $c_j=c_i \pm \frac{k\pm 1}2$, i.e., $\vert c_i-c_j\vert =  \frac{k\pm 1}2$. From Table \ref{tevenkodd} we also obtain $\vert i-j\vert =\frac t2 k =  \frac{k-1}2 t+ \frac t2$ or $\vert i-j\vert =\frac t2 k + 1=\frac{k-1}2 t +\frac t2+1$. Suppose that $\vert i-j\vert =\frac{k-1}2 t+\frac t2$. Then, by Lemma \ref{ldist}, $d_G(i,j)= \min \left\{ \frac {k-1}2 +\frac t2; \frac{k-1}2+1+ t-\frac t2 \right\}$. For $t\geq 4$ we get $d_G(i,j)>\frac{k+1}2$. Now suppose that $\vert i-j\vert =\frac{k-1}2 t+\frac t2+1$. Then, by Lemma \ref{ldist}, $d_G(i,j)= \min \left\{ \frac {k-1}2 +\frac t2+1; \frac{k-1}2+1+t-\frac t2-1 \right\}$. For $t\geq 4$ we get $d_G(i,j)>\frac{k+1}2$. Thus we have shown that, for every even $t\geq4$, $\vert c_i-c_j\vert + d_G(i,j)>\frac {k-1}2+\frac {k+1}2=k$.
\emi
We have shown that the defined labeling is a radio $k$-labeling of $G$. Clearly the maximum used label is $\left( \frac t2 k  \right)k=\frac t2 k^{2}$.
\end{proof}

\subsection{$D(t-1,t)$}

Now we consider the distance graph $G=D(t-1,t)$. For even $k$ we did not find any improvement of the upper bound for $\text{rl}_k(G)$ given in Theorem \ref{thmkeven}. But for odd $k$ we prove the following statement which decreases the upper bound for $\text{rl}_k(G)$ given by Theorem \ref{thmkodd}.

\begin{Theorem} \label{thm D(t-1,t)}
Let $t> 2$ be an integer and $k\geq 3$ be and odd integer, let $G$ be a distance graph $D(t-1,t)$. Then
$\text{rl}_{k}(G)\leq \frac t2 k^{2}+k-\frac{t+2}{2}$. 
\end{Theorem}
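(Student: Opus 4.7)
The plan is to mirror the construction used in Theorems~\ref{thm D(1,t) t odd} and~\ref{thm D(1,t) t even}: build an explicit labeling $c$ on a single block of $tk+1$ consecutive integers, extend it periodically to $\mathbb{Z}$ by $c(a+b(tk+1))=c(a)$ for $b\in\mathbb{Z}$, and verify inequality~(\ref{eqn1}) by checking that no conflict arises between any two vertices whose distance on $\mathbb{Z}$ is smaller than one period. The labels inside a block should again consist of two arithmetic progressions of common difference $k$, the first starting at $0$ and the second shifted by $\tfrac{k-1}{2}$, but now the number of terms in each progression and the exact interleaving on the vertex set must be retuned so that the maximum used label is exactly $\tfrac{t}{2}k^{2}+k-\tfrac{t+2}{2}$ rather than $\tfrac{t}{2}k^{2}+\tfrac{t}{2}k$. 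The $-\tfrac{t+2}{2}$ saving is what one should try to extract from the sparser edge-set of $D(t-1,t)$.

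With such a pattern in hand, the verification of~(\ref{eqn1}) proceeds by the same three-way case split on $|c_i-c_j|$. The cases $|c_i-c_j|>k$, $|c_i-c_j|=k$ and $|c_i-c_j|=0$ are essentially formal and follow as in the earlier proofs; for the last one Lemma~\ref{let-1t} together with $|i-j|\ge tk+1$ yields $d_G(i,j)>k$. The crucial case is $0<|c_i-c_j|<k$: the two-progression structure forces $|c_i-c_j|\in\{\tfrac{k-1}{2},\tfrac{k+1}{2}\}$ and pins $|i-j|$ to a short list of values clustered around $\tfrac{t}{2}k$. For each such value we must show $d_G(i,j)\ge\tfrac{k+1}{2}$ so that $|c_i-c_j|+d_G(i,j)>k$.

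The main obstacle is precisely this lower bound on $d_G(i,j)$ in $D(t-1,t)$, since Lemma~\ref{let-1t} only supplies an upper bound. Writing $|i-j|=a(t-1)+bt$ with $a,b\in\mathbb{Z}$, we have $d_G(i,j)=\min(|a|+|b|)$ over all such representations, and one needs to argue that for the critical values of $|i-j|$ no representation is cheap. I would handle this by a short case analysis, splitting on the parity of $t$ and on the residue of $|i-j|$ modulo $t$, using the fact that any representation with $|a|+|b|<\tfrac{k+1}{2}$ would force $|i-j|$ to be too close to $0$ modulo $\gcd(t-1,t)=1$ in a quantitative sense, i.e., would lie in a narrow band that the chosen offset $\tfrac{k-1}{2}$ and the block length $tk+1$ are designed to avoid. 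Fine-tuning this avoidance is exactly where the constants $k-\tfrac{t+2}{2}$ in the theorem come from, and I expect the bookkeeping for both parities of $t$ to be the most delicate part of the proof.
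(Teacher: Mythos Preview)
Your plan fixes the wrong two parameters. You keep the step size $k$ and the period $tk+1$ from the $D(1,t)$ proofs and hope to ``retune'' the progression lengths, but with those choices the target span $\tfrac{t}{2}k^{2}+k-\tfrac{t+2}{2}=(k-1)\bigl(\tfrac{t}{2}k+\tfrac{t+2}{2}\bigr)$ is not even a possible maximum label (neither $(m-1)k$ nor $\tfrac{k-1}{2}+(m-1)k$ can equal it for integer $m$), and more seriously the labeling is not valid: for $t=3$, $k=3$ the break between the two progressions would sit at $|i-j|\in\{4,5\}$, where $d_{D(2,3)}(i,j)=2$, so $|c_i-c_j|+d_G(i,j)\le \tfrac{k-1}{2}+2=3\not>k$.

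The key idea you are missing is that in $D(t-1,t)$ with $t>2$ the integers $i$ and $i+1$ are \emph{not} adjacent; in fact $d_G(i,i+1)\ge 2$. Hence consecutive vertices may receive labels differing by only $k-1$ rather than $k$. The paper therefore takes the common difference to be $l=k-1$ (not $k$), uses shift $\tfrac{l}{2}=\tfrac{k-1}{2}$, and lengthens the period to $tk+t+3$. With step $l$ the only small label gaps are $|c_i-c_j|=k-1$ (within a progression, where $|i-j|=1$ and $d_G\ge 2$ settles it) and $|c_i-c_j|=\tfrac{k-1}{2}$ (across progressions), for which the longer period places the break at $|i-j|>\tfrac{k+1}{2}\,t$, whence $d_G(i,j)>\tfrac{k+1}{2}$ by the trivial bound ``each edge moves you at most $t$''. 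There is no split on the parity of $t$; the argument is uniform.

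A minor point: Lemma~\ref{let-1t} is an \emph{upper} bound on $d_G$, so it cannot give $d_G(i,j)>k$ as you claim. The inequality $d_G(i,j)>k$ for $|i-j|>tk$ holds simply because any walk of length at most $k$ in $D(t-1,t)$ changes the integer coordinate by at most $kt$.
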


\begin{proof}
First we define a periodical pattern of labels. For vertices $1,2,\dots, tk+t+3$ of the distance graph $G$ we set labels using Table \ref{t-1,t}, where $l=k-1$.

\begin{table}[ht]
\centering
\scriptsize
$$\begin{array}{|c|c|c|c|c|c|c|c|c|c|c|c|c|} 
\hline
\mbox{vertex} & 1 & 2 & 3 & 4 & \dots &  \frac t2 k+\frac{t+2}2+ 1 & \frac t2 k+\frac{t+2}2+2 & \frac t2 k+\frac{t+2}2+3 & \frac t2 k+\frac{t+2}2+4 &\dots & tk+t+3 \\
\hline
\mbox{label} & 0 & l & 2l & 3l & \dots & \left( \frac t2 k+\frac{t+2}2 \right) l & \frac {l}2 & \frac {l}2 +l & \frac {l}2 + 2l & \dots & \frac {l}2 + \left( \frac t2 k +\frac t2\right) l \\
\hline
\end{array}$$
\normalsize
\caption{\label{t-1,t}A periodical pattern for $G=D(t-1,t)$ and odd $k$.}
\end{table}

Then we can define a labeling $c$ of all vertices of $G$ setting $c(a+b(tk+t+3))=c(a)$, $ a\in \{1,2,\dots, tk+t+3\}$ and $b\in \mathbb{Z}$, i.e., we repeat the defined periodical pattern for all vertices of $G=D(t-1,t)$.

Now we show that the labeling $c$ is a radio $k$-labeling of $G$, i.e., the inequality (\ref{eqn1}) holds for every $i,j\in V(G)$. Note that the length of the pattern is $tk+t+3$ and clearly $d_G(i,j)>k$ for every $i,j$ with $\vert i-j\vert \geq tk+t+3$. Therefore it suffices to prove that there is no conflict in labeling $c$ between vertices in two consecutive copies of the pattern. If $\vert c_i-c_j\vert >k$ then the inequality (\ref{eqn1}) trivially holds. The following possibilities can occur.

\bmi
\item[{\sl Case 1:}] $\vert c_i-c_j\vert =0$. From the definition of the pattern given by Table \ref{t-1,t}, it follows that $\vert i-j\vert>tk$, implying that the inequality (\ref{eqn1}) holds.
\item[{\sl Case 2:}] $\vert c_i-c_j\vert =k-1$. It follows that $\vert i-j\vert=1$. For $t>2$ we have $d_G(i,j)>1$, implying that the inequality (\ref{eqn1}) holds.
\item[{\sl Case 3:}] $\vert c_i-c_j\vert = \frac{k-1}2$. By the definition of the pattern given by Table \ref{t-1,t}, $\vert i-j\vert =\frac t2 k+\frac t2+1$ or $\vert i-j\vert = \frac t2 k+\frac t2+2$. Since $\vert i-j\vert > \frac{k+1}2 t$ and $k$ is odd, we get $d_G(i,j)>\frac{k+1}2$, implying that $\vert c_i-c_j\vert + d_G(i,j)>\frac{k-1}2+\frac{k+1}2=k$.
\emi
The maximum used label in $c$ is $\left(\frac t2 k +\frac {t+2}{2} \right)(k-1)=\frac t2 k^2+k-\frac{t+2}2$. 

\end{proof}

\section{Values and bounds for $\mbox{rl}_k(D(1,2,\dots, t)$, $\mbox{rl}_k(D(1,t))$ and \\ $\mbox{rl}_k(D(t-1,t))$ for small $k$, $t$. }

Lower and upper bounds on radio $k$-labeling number of the distance graphs $D(1,2,\dots, t)$, $D(1,t)$ and $D(t-1,t)$ can be obtained from theorems and propositions given in Sections 2 and 3. For small values $t,k \in \{2,\dots,9 \}$, we improve these bounds using a computer. For finding lower bounds we used brute force search program. The program takes vertices $X=\{ 1,2,\dots,i \}$ of the distance graph $G$ and it tries to construct a radio $k$-labeling $c$ of $X$ using labels $0,\dots, l$. First it assigns label $0$ to vertex $1$ (there must be a vertex with label 0, otherwise we can decrease all labels to get smaller bound) and tries to extend $c$ to $X$. If the extension is not possible we conclude that $\mbox{rl}_k(G)>l$. 

For finding upper bounds, we found and verified (again using computer) patterns, which can be periodically repeated for a whole distance graph $G$.

The lower and upper bounds shown in Tables \ref{tab1}, \ref{tab2} and \ref{tab3} are presented at the web pages \texttt{http://home.zcu.cz/\~{}holubpre/radio\underline{ }labeling/} and were computed on the Metacentrum computing facilities. 

We end this section by presenting some lower and upper bounds for $\text{rl}_k(D(1,2,\dots,t))$, $\text{rl}_k(D(1,t))$ and $\text{rl}_k(D(t-1,t))$  for small positive integers $k$ and $t$ in the following tables. The emphasized numbers are exact values, all the pairs of values are lower and upper bounds.

\begin{table}[ht]
\centering
$$ \begin{array}{|c|c|c|c|c|c|c|c|c|}
\hline
t \diagdown k & 2 & 3 & 4 & 5 & 6 & 7 & 8 & 9   \\
\hline
2 & {\bf 6} & {\bf 12} & {\bf 20} & {\bf 30} & {\bf 42} &  {\bf 56} & 65-72 & 82-90   \\
\hline
3 & {\bf 8} & {\bf 17} & {\bf 28} & {\bf 43} & 55-60  &   74-81 & 97-104 & 122-135  \\
\hline
4 & {\bf 10} & {\bf 22} & {\bf 36} & 51-56 & 73-78   & 99-112 & 129-136 & 163-180 \\
\hline
5 & {\bf 12} &{\bf 27} & {\bf 43} & 63-69 & 91-96   & 123-131 & 161-168 & 203-217 \\
\hline
6 & {\bf 14} & {\bf 32} & 49-52 & 76-82 &  109-114  &   148-163 &  193-200 & 244-259  \\
\hline
7 & {\bf 16} & 32-37 &  57-60 &  88-95 &  127-132  &  172-189 &  225-232 &  284-301 \\
\hline
8 & {\bf 18} & 37-42 &  65-68 &  101-108 & 145-150 &   197-215 & 257-264 &  325-343 \\
\hline
9 & {\bf 20} & 41-47 &  73-76 & 113-121 &  163-168 &   221-241 &  289-296 &  365-385 \\
\hline
\end{array}$$
\caption{\label{tab1} Values and bounds for $\text{rl}_k(D(1,2,\dots,t))$ for small $k,t$.}
\end{table}

\begin{table}[ht]
\centering
$$\begin{array}
{|c|c|c|c|c|c|c|c|c|}
\hline
t \diagdown k & 2 & 3 & 4 & 5 & 6 & 7 & 8 & 9  \\
\hline
2 & {\bf 6} & {\bf 12} & {\bf 20} & {\bf 30} & {\bf 42} & {\bf 56} & 65- 72 & 82- 90  \\
\hline
3 & {\bf 6} & {\bf 11} & {\bf 24} & {\bf 33} & 51-  52 & 61 -73 & 81- 100 & 105- 121   \\
\hline
4 & {\bf 6} & {\bf 15} & {\bf 26} & {\bf 43} & 54- 64 & 69- 94 & 95- 116 & 124- 152 \\
\hline
5 &{\bf 6} &{\bf 13} & {\bf 26}& {\bf 41} & 49- 66 & 73- 91 & 103- 140 & 137- 165 \\
\hline
6 & {\bf 7} & {\bf 14} & {\bf 28} &  41- 48 &  46- 72 &  73- 102 & 105 -147 &   144- 196 \\
\hline
7 & {\bf 7} & {\bf 12} & {\bf 26} & {\bf 37} &  42- 78 &  69- 111 & 104-146 &  145-201 \\
\hline
8 & {\bf 7} & {\bf 13} & {\bf 26} & 36- 48 & 46- 75 & 62- 116 & 98- 159 & 141- 212 \\
\hline
9 & {\bf 6} & {\bf 11} &  25- 28 &  32- 41 &  40- 74 &  54- 99 & 89 - 156 & 134 - 207 \\
\hline
\end{array}$$
\caption{\label{tab2}Values and bounds for $\text{rl}_k(D(1,t))$ for small $k,t$.}
\end{table}

\begin{table}[ht]
\centering
$$\begin{array}
{|c|c|c|c|c|c|c|c|c|}
\hline
t \diagdown k & 2 & 3 & 4 & 5 & 6 & 7 & 8 & 9   \\
\hline
2 & {\bf 6} & {\bf 12} & {\bf 20} & {\bf 30} & {\bf 42} &  {\bf 56} & 65-72 & 82-90   \\
\hline
3 & {\bf 6} & {\bf 14} & {\bf 28} & {\bf 40} & 51-60  &   60-78 & 66-104 & 71-128  \\
\hline
4 & {\bf 7} & {\bf 14} & {\bf 27} & 40-48 & 50-70   & 57-99 & 65-131 & 71-166 \\
\hline
5 & {\bf 6} &{\bf 13} & {\bf 26} & 37-50 & 47-78 & 55-116 & 61-150 & 70-191 \\
\hline
6 & {\bf 6} & {\bf 14} & 24-30 & 32-54 & 40-69 &  50-108 & 60-153 & 65-208  \\
\hline
7 & {\bf 7} & {\bf 13} & 21-28 & 28-48&  36-74 &  45-109 &  55-148 &  60-223 \\
\hline
8 & {\bf 7} & {\bf 13} &  19-26 &  24-48 & 32-73 & 40-120 & 49-148 &  56-211 \\
\hline
9 & {\bf 6} & {\bf 12} &  17-27 &  23-53 & 30-70 &   36-109 & 44-164 &  51-233 \\
\hline
\end{array}$$
\caption{\label{tab3}Values and bounds for $\text{rl}_k(D(t-1,t))$ for small $k,t$.}
\end{table}

\section*{Acknowledgement}

The first three authors were supported by the Center of Excellence -- Institute for Theoretical Computer Science, Prague (project P202/12/G061 of Grant Agency of the Czech Republic). The last author was supported by Burgundy University under project BQR 036 (2011).

This work was supported by the European Regional Development Fund (ERDF), project "NTIS - New Technologies for the Information Society", European Centre of Excellence, CZ.1.05/1.1.00/02.0090.

The access to the MetaCentrum computing facilities provided under the programme ``Projects of Large Infrastructure for Research, Development and Innovations" LM2010005 funded by the Ministry of Education, Youth, and Sports of the Czech Republic is highly appreciated.

\clearpage

\end{document}